\documentclass{article}

\usepackage[english,portuges]{babel}

\usepackage{amsmath,amssymb,amsthm}
\usepackage[latin1]{inputenc}
\usepackage[T1]{fontenc}

\usepackage{graphicx}

\usepackage{url}

\usepackage{eepic,epsfig}

\usepackage{hyperref}


\newcommand{\xiipt}{}


\newtheorem{theorem}{Teorema}
\newtheorem{corollary}[theorem]{Corol\'{a}rio}
\newtheorem{proposition}[theorem]{Proposi\c{c}\~{a}o}

\theoremstyle{remark}
\newtheorem{remark}[theorem]{Observa\c{c}\~{a}o}
\newtheorem*{problema}{Problema}

\theoremstyle{definition}
\newtheorem{definition}[theorem]{Defini\c{c}\~{a}o}


\def\R{\mathbb{R}}


\begin{document}

\selectlanguage{portuges}

\title{O controlo \'{o}ptimo e as suas m\'{u}ltiplas aplica\c{c}\~{o}es\thanks{Dedicado
a Francis Clarke e a Richard Vinter por ocasi\~{a}o da celebra\c{c}\~{a}o
do sexag\'{e}simo anivers\'{a}rio de ambos os matem\'{a}ticos,
\emph{Workshop in Control, Nonsmooth Analysis
and Optimization}, Porto, 4 a 8 de Maio de 2009
\href{http://ceoc.mat.ua.pt/fc-rv-60}{\texttt{<http://ceoc.mat.ua.pt/fc-rv-60>}.}}}

\author{Cristiana J. Silva$^{1,2}$\\
\href{mailto:cjoaosilva@ua.pt}{\url{cjoaosilva@ua.pt}}
\and Delfim F. M. Torres$^{1}$\\
\href{mailto:delfim@ua.pt}{\url{delfim@ua.pt}}
\and Emmanuel Tr\'elat$^{2}$\\
\href{mailto:emmanuel.trelat@univ-orleans.fr}{\url{emmanuel.trelat@univ-orleans.fr}}}

\date{$^{1}$Control theory group (cotg)\\
Centro de Estudos em Optimiza\c{c}\~{a}o e Controlo (CEOC)\\
Departamento de Matem\'{a}tica, Universidade de Aveiro\\
3810-193 Aveiro, Portugal\\[0.3cm]
$^{2}$Universit\'{e} d'Orl\'{e}ans, UFR Sciences\\
F\'{e}d\'{e}ration Denis Poisson\\
Math\'{e}matiques, Laboratoire MAPMO, UMR 6628\\
45067 Orl\'{e}ans Cedex 2, France}

\maketitle


\begin{abstract}
Neste trabalho s\~{a}o referidas motiva\c{c}\~{o}es,
aplica\c{c}\~{o}es e rela\c{c}\~{o}es da teoria do controlo
com outras \'{a}reas da matem\'{a}tica. Apresentamos uma breve
resenha hist\'{o}rica sobre o controlo \'{o}ptimo, desde as suas
origens no c\'{a}lculo das varia\c{c}\~{o}es e na teoria cl\'{a}ssica do controlo
aos dias de hoje, dando especial destaque
ao princ\'{\i}pio do m\'{a}ximo de Pontryagin.

\smallskip

\noindent \textbf{Palavras chave:} controlo \'{o}ptimo,
princ\'{\i}pio do m\'{a}ximo de Pontryagin, aplica\c{c}\~{o}es
da teoria matem\'{a}tica dos sistemas de controlo.
\end{abstract}

\selectlanguage{english}

\begin{abstract}
In this work we refer to motivations,
applications, and relations of control theory
with other areas of mathematics.
We present a brief historical review of optimal control theory,
from its roots in the calculus of variations and the classical theory
of control to the present time, giving particular emphasis
to the Pontryagin maximum principle.

\smallskip

\noindent \textbf{Keywords:} optimal control,
Pontryagin maximum principle, applications
of the mathematical theory of control.

\smallskip

\noindent \textbf{2000 Mathematics Subject Classification:} 49-01.
\end{abstract}

\selectlanguage{portuges}


\section{Introdu\c{c}\~{a}o}

Todos n\'{o}s j\'{a} tent\'{a}mos, numa ou outra ocasi\~{a}o,
manter em equil\'{\i}brio uma vara sobre o dedo indicador
(\textrm{i.e.}, resolver o problema do p\^{e}ndulo invertido).
Por outro lado \'{e} muito mais dif\'{\i}cil, sobretudo se fecharmos os olhos,
manter em equil\'{\i}brio um p\^{e}ndulo invertido duplo.
A teoria do controlo permite faz\^{e}-lo sob a condi\c{c}\~{a}o de dispormos
de um bom modelo matem\'{a}tico.

Um sistema de controlo \'{e} um sistema din\^{a}mico, que evolui no tempo,
sobre o qual podemos agir atrav\'{e}s de uma fun\c{c}\~{a}o de comando ou controlo.
Um computador, que permite a um utilizador efectuar uma s\'{e}rie de comandos,
um ecossistema sobre o qual podemos agir favorecendo esta ou aquela esp\'{e}cie,
os tecidos nervosos que formam uma rede controlada pelo c\'{e}rebro e realizam
a transforma\c{c}\~{a}o de est\'{\i}mulos provenientes do exterior em ac\c{c}\~{o}es do organismo,
um robot que deve efectuar uma tarefa bem precisa, uma viatura sobre
a qual agimos por interm\'{e}dio de um pedal de acelera\c{c}\~{a}o,
de travagem e embraiagem e que conduzimos com a ajuda de um volante,
um sat\'{e}lite ou uma nave espacial, s\~{a}o todos eles exemplos de sistemas
de controlo, os quais podem ser modelados e estudados
pela teoria dos sistemas de controlo.

A teoria do controlo analisa as propriedades de tais sistemas,
com o intuito de os ``conduzir'' de um determinado estado inicial
a um dado estado final, respeitando eventualmente certas restri\c{c}\~{o}es.
A origem de tais sistemas pode ser muito diversa:
mec\^{a}nica, el\'{e}ctrica, biol\'{o}gica, qu\'{\i}mica, econ\'{o}mica, etc.
O objectivo pode ser o de estabilizar o sistema tornando-o
insens\'{\i}vel a certas perturba\c{c}\~{o}es (problema de \emph{estabiliza\c{c}\~{a}o})
ou ainda determinar as solu\c{c}\~{o}es \'{o}ptimas relativamente
a um determinado crit\'{e}rio de optimiza\c{c}\~{a}o
(problema do \emph{controlo \'{o}ptimo}).
Para modelar os sistemas de controlo podemos recorrer a equa\c{c}\~{o}es diferenciais,
integrais, funcionais, de diferen\c{c}as finitas, \`{a}s derivadas parciais,
determin\'{\i}sticas ou estoc\'{a}sticas, etc.
Por esta raz\~{a}o a teoria do controlo vai beber e contribui
em numerosos dom\'{\i}nios da matem\'{a}tica (\textrm{vide}, \textrm{e.g.},
\cite{Bryson,Coron_Trelat_2004,Coron_Trelat_2007,MR0638591,McShane,Sontag}).

A estrutura de um sistema de controlo \'{e} representada pela interconex\~{a}o
de certos elementos mais simples que formam sub-sistemas.
Neles transita \emph{informa\c{c}\~{a}o}. A din\^{a}mica de um sistema de controlo
define as transforma\c{c}\~{o}es poss\'{\i}veis do sistema,
que ocorrem no tempo de maneira determinista ou aleat\'{o}ria.
Os exemplos j\'{a} dados mostram que a estrutura e a din\^{a}mica
de um sistema de controlo podem ter significados muito diferentes.
Em particular, o conceito de sistema de controlo pode descrever
transforma\c{c}\~{o}es discretas, cont\'{\i}nuas, h\'{\i}bridas ou,
de um modo mais geral, numa \emph{time scale} ou
\emph{measure chain} \cite{comRui:TS:Lisboa07,311,Naty}.

Um sistema de controlo diz-se \emph{control\'{a}vel} se o podemos ``conduzir''
(em tempo finito) de um determinado estado inicial at\'{e} um estado final prescrito.
Em rela\c{c}\~{a}o ao problema da controlabilidade, Kalman demonstrou em 1949
um resultado importante que caracteriza os sistemas lineares control\'{a}veis
de dimens\~{a}o finita (Teorema~\ref{condKalman}).
Para sistemas n\~{a}o lineares o problema matem\'{a}tico da controlabilidade
\'{e} muito mais dif\'{\i}cil e constitui um dom\'{\i}nio de investiga\c{c}\~{a}o ainda activo nos dias de hoje.

Assegurada a propriedade de controlabilidade, podemos desejar passar
de um estado inicial a um estado final minimizando
ou maximizando um determinado crit\'{e}rio. Temos ent\~{a}o um problema
de controlo \'{o}ptimo. Por exemplo, um condutor que efectue
o trajecto Lisboa-Porto pode querer viajar em tempo m\'{\i}nimo.
Nesse caso escolhe o trajecto pela auto-estrada A1. Uma consequ\^{e}ncia
de tal escolha ser\'{a} o pagamento de portagem. Outro problema
de controlo \'{o}ptimo \'{e} obtido se tivermos como crit\'{e}rio de
minimiza\c{c}\~{a}o os custos da viagem. A solu\c{c}\~{a}o de tal problema
envolver\'{a} a escolha de estradas nacionais, gratuitas,
mas que levam muito mais tempo a chegar ao destino
(segundo a informa\c{c}\~{a}o do s\'{\i}tio da internet \url{http://www.google.pt/maps}
o trajecto pela auto-estrada dura 3h
e pela estrada nacional dura 6h45m).
Um problema de controlo \'{o}ptimo pode ser formulado do seguinte modo.
Consideremos um sistema de controlo, cujo estado num determinado instante
\'{e} representado por um vector. Os controlos s\~{a}o fun\c{c}\~{o}es ou par\^{a}metros,
habitualmente sujeitos a restri\c{c}\~{o}es, que agem sobre o sistema
sob a forma de for\c{c}as exteriores,
de potenciais t\'{e}rmicos ou el\'{e}ctricos, de programas de investimento, etc.
e afectam a din\^{a}mica. Uma equa\c{c}\~{a}o \'{e} dada,
ou tipicamente um sistema de equa\c{c}\~{o}es diferenciais,
relacionando as vari\'{a}veis e modelando a din\^{a}mica do sistema.
\'{E} depois necess\'{a}rio utilizar a informa\c{c}\~{a}o presente e as caracter\'{\i}sticas
do problema para construir os controlos adequados
que v\~{a}o permitir realizar um objectivo preciso.
Por exemplo, quando nos deslocamos na nossa viatura agimos
de acordo com o c\'{o}digo da estrada (pelo menos \'{e} aconselh\'{a}vel)
e concretizamos um plano de viagem para chegar ao nosso destino.
S\~{a}o impostas restri\c{c}\~{o}es sobre a traject\'{o}ria ou sobre os controlos,
que \'{e} imprescind\'{\i}vel ter em considera\c{c}\~{a}o.
Fixamos um crit\'{e}rio permitindo medir a qualidade do processo escolhido.
Este apresenta-se normalmente sob a forma de uma funcional
que depende do estado do sistema e dos controlos.
Para al\'{e}m das condi\c{c}\~{o}es anteriores procuramos ainda minimizar (ou maximizar)
esta quantidade. Um exemplo j\'{a} dado anteriormente \'{e} o de deslocarmo-nos
em tempo m\'{\i}nimo de um ponto a outro. Notemos que a forma das traject\'{o}rias \'{o}ptimas
depende fortemente do crit\'{e}rio de optimiza\c{c}\~{a}o. Por exemplo,
para estacionar o nosso carro \'{e} f\'{a}cil verificar que a traject\'{o}ria
seguida difere se queremos realizar a opera\c{c}\~{a}o em tempo m\'{\i}nimo
(o que \'{e} arriscado) ou minimizando a quantidade de combust\'{\i}vel gasta na opera\c{c}\~{a}o.

\begin{figure}[!ht]
\centering
\includegraphics[width=5cm]{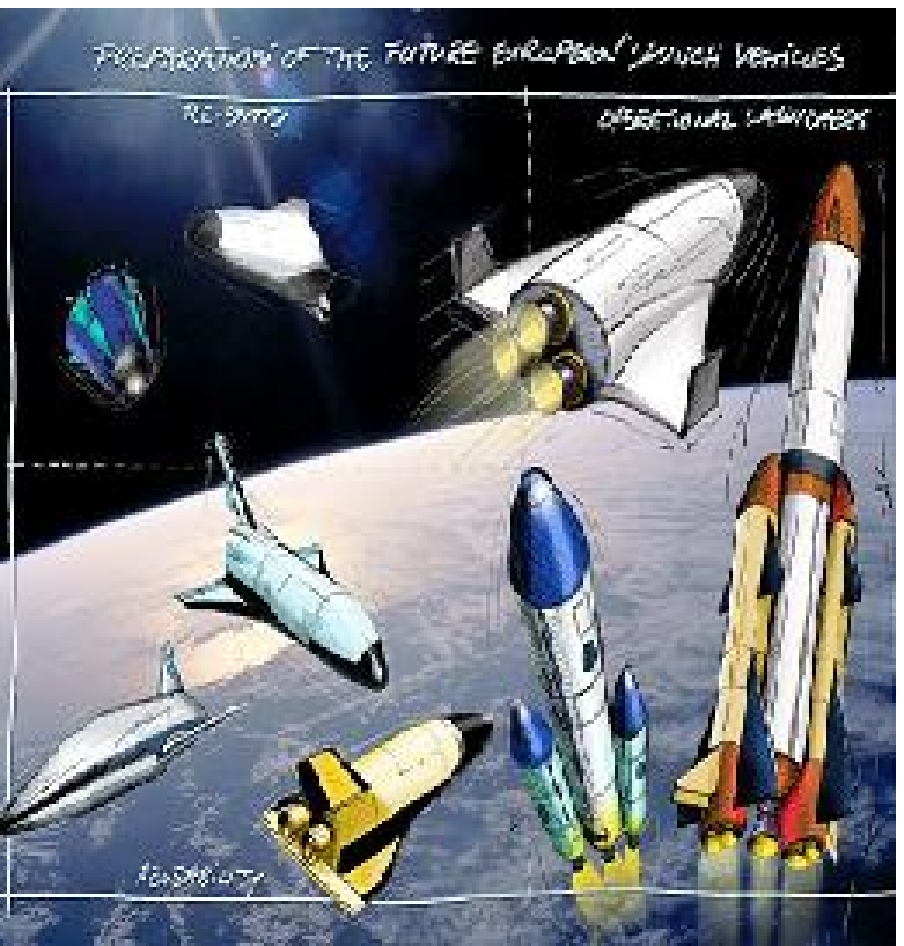}
\includegraphics[width=5cm]{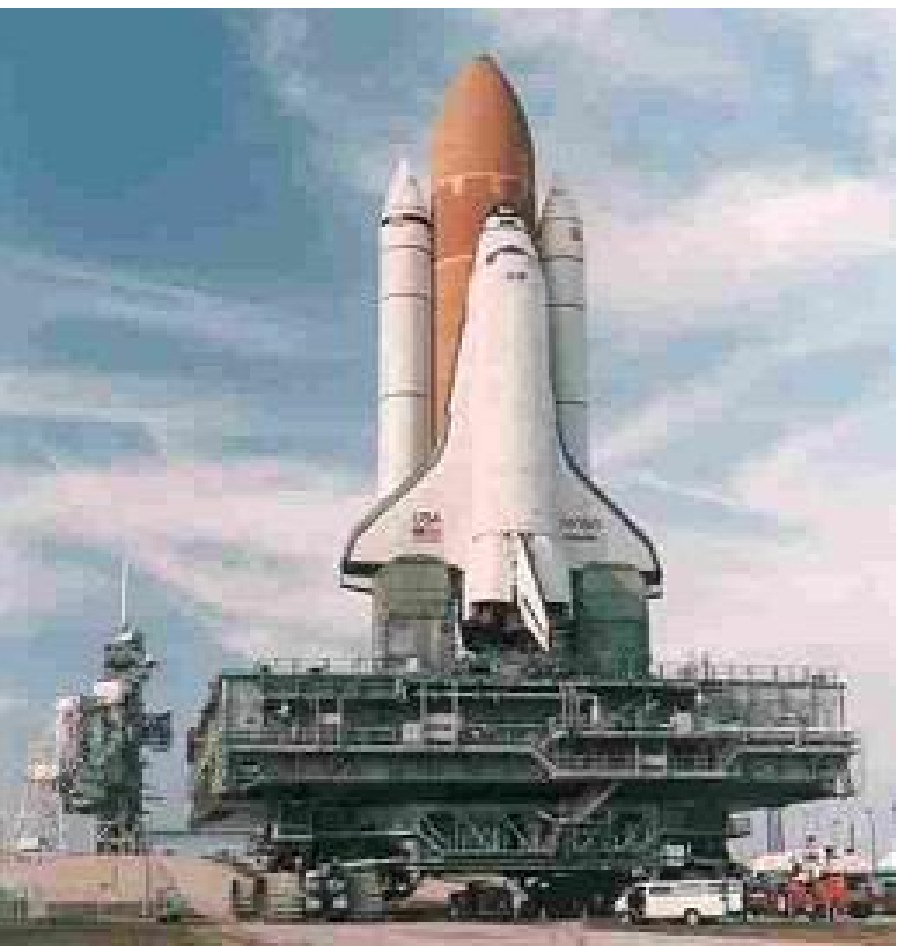}
\caption{a teoria do controlo \'{o}ptimo tem um papel importante
na engenharia aeroespacial.}
\label{espaco}
\end{figure}

A teoria do controlo \'{o}ptimo tem uma grande import\^{a}ncia
no dom\'{\i}nio aeroespacial, nomeadamente em problemas de condu\c{c}\~{a}o,
transfer\^{e}ncia de \'{o}rbitas aero-assistidas, desenvolvimento
de lan\c{c}adores de sat\'{e}lites recuper\'{a}veis (o aspecto financeiro
\'{e} aqui muito importante) e problemas da reentrada atmosf\'{e}rica,
como seja o famoso projecto \emph{Mars Sample Return}
da Ag\^{e}ncia Espacial Europeia (ESA), que consiste em enviar
uma nave espacial ao planeta Marte com o objectivo
de trazer amostras marcianas (Figura~\ref{espaco}).


\section{Breve resenha hist\'{o}rica}

O c\'{a}lculo das varia\c{c}\~{o}es nasceu no s\'{e}culo dezassete com o contributo
de Bernoulli, Fermat, Leibniz e Newton.
Alguns matem\'{a}ticos como H.J.~Sussmann e J.C.~Willems defendem
a origem do controlo \'{o}ptimo coincidente com o nascimento
do c\'{a}lculo das varia\c{c}\~{o}es, em 1697, data de publica\c{c}\~{a}o
da solu\c{c}\~{a}o do problema da braquist\'{o}crona pelo matem\'{a}tico Johann Bernoulli
\cite{Sussmann}. Outros v\~{a}o ainda mais longe, chamando a aten\c{c}\~{a}o
para o facto do problema da resist\^{e}ncia aerodin\^{a}mica de Newton,
colocado e resolvido por Isaac Newton em 1686,
no seu \emph{Principia Mathematica}, ser um verdadeiro
problema de Controlo \'{O}ptimo \cite{comCristianaCC,NewtonTypeProb}.

Em 1638 Galileu estudou o seguinte problema: determinar a curva sobre
a qual uma pequena esfera rola sob a ac\c{c}\~{a}o da gravidade,
sem velocidade inicial e sem atrito, de um ponto $A$ at\'{e} um ponto $B$
com um tempo de percurso m\'{\i}nimo (escorrega de tempo m\'{\i}nimo,
ver Figura~\ref{pontoAeB}).

\begin{figure}[!htb]
\centering
\includegraphics[width=5cm]{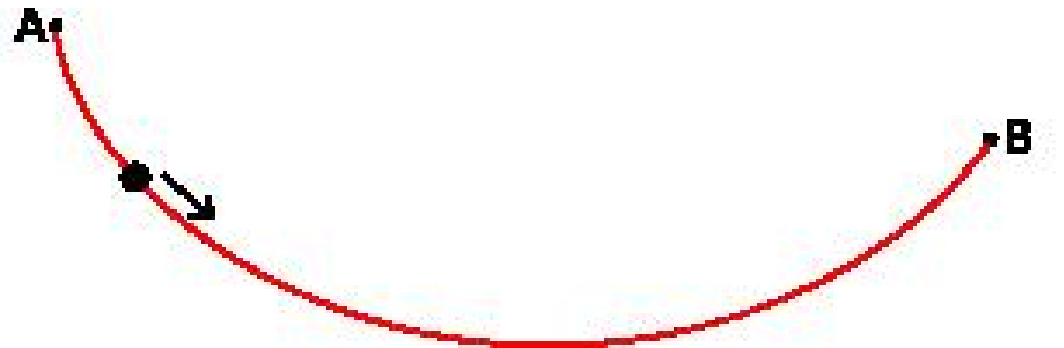}
\includegraphics[width=5cm]{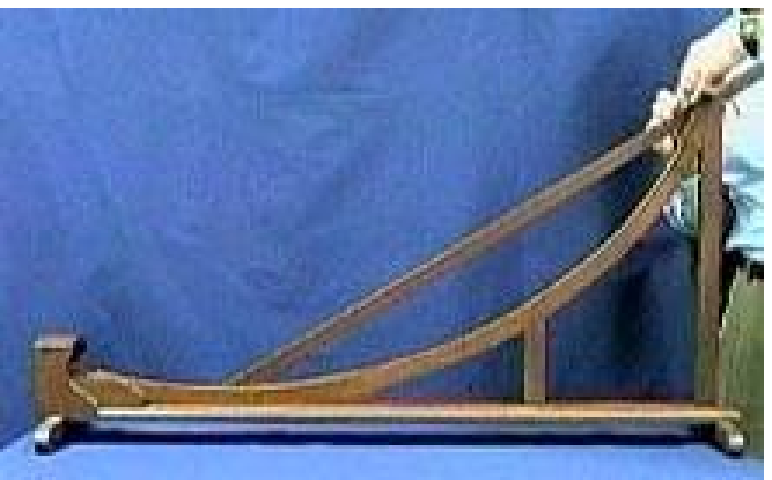}
\caption{problema da braquist\'{o}crona.}
\label{pontoAeB}
\end{figure}

Trata-se do problema da braquist\'{o}crona (do grego \emph{brakhistos}, ``o mais breve'',
e \emph{chronos}, ``tempo''). Galileu pensou (erradamente) que a curva procurada
era um arco de c\'{\i}rculo. Observou no entanto, correctamente, que o segmento
de linha recta n\~{a}o \'{e} o caminho de tempo mais curto. Em 1696,
Jean Bernoulli colocou este problema como um desafio
aos melhores matem\'{a}ticos da sua \'{e}poca. Ele pr\'{o}prio encontrou a solu\c{c}\~{a}o,
assim como o seu irm\~{a}o Jacques Bernoulli, Newton, Leibniz
e o marqu\^{e}s de l'Hopital. A solu\c{c}\~{a}o \'{e} um arco de cicl\'{o}ide come\c{c}ando
com uma tangente vertical \cite{comAndreia,Sussmann}.
As rampas de skate assim como as descidas mais r\'{a}pidas
dos \emph{aqua-parques},
t\^{e}m a forma de cicl\'{o}ide (Figura~\ref{cicloide}).

\begin{figure}[!htb]
\centering
\includegraphics[width=5cm]{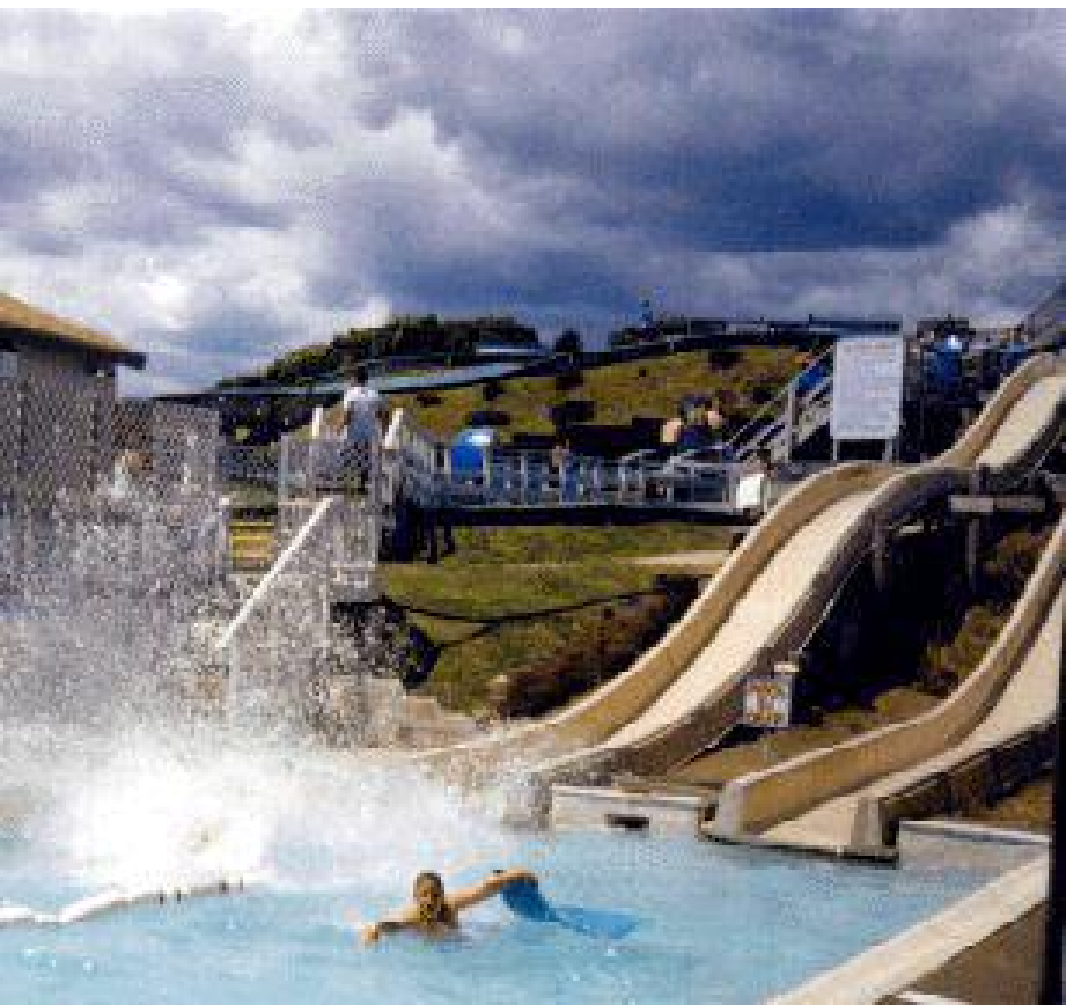}
\includegraphics[width=5cm]{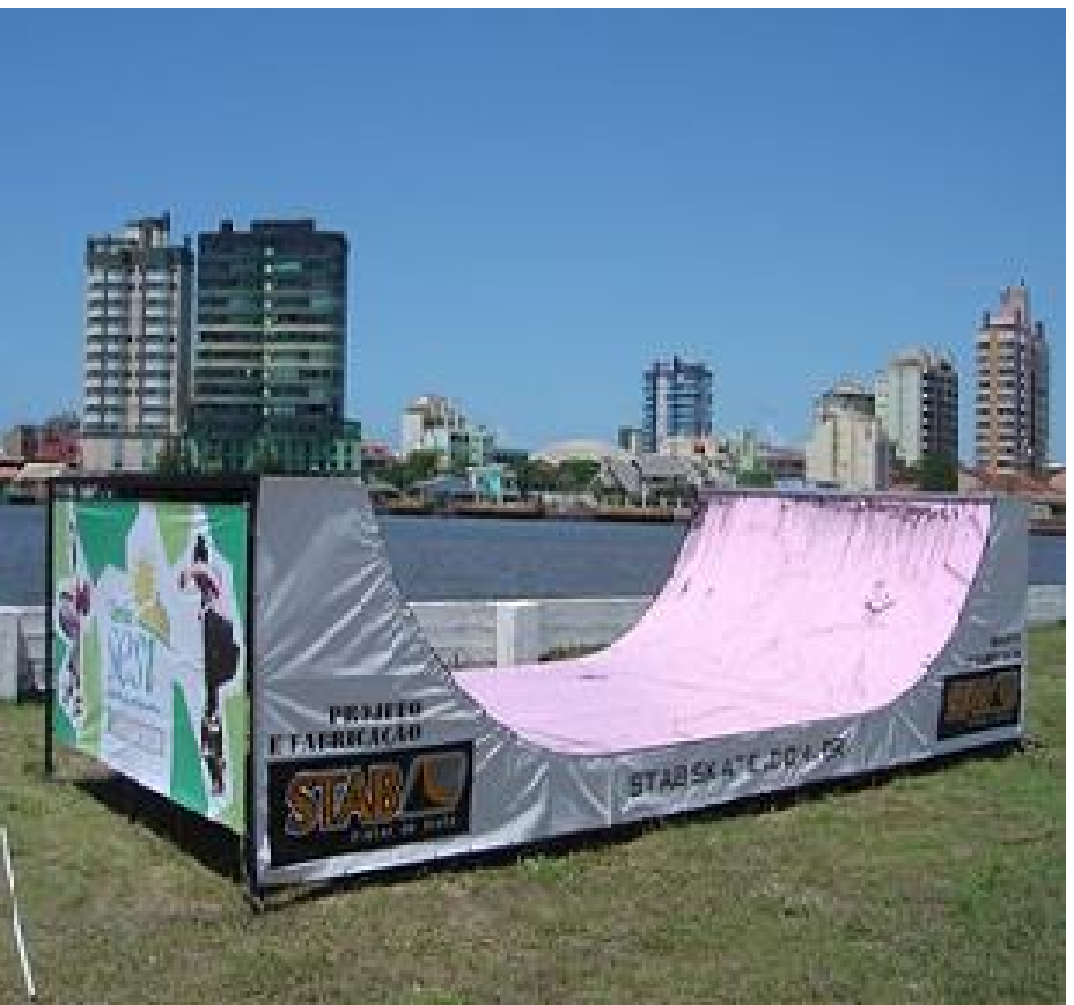}
\caption{arcos de cicl\'{o}ide conduzem \`{a}s descidas mais r\'{a}pidas
e \`{a} adrenalina m\'{a}xima.}
\label{cicloide}
\end{figure}

A teoria do controlo \'{o}ptimo surge depois da segunda guerra mundial,
respondendo a necessidades pr\'{a}ticas de engenharia,
nomeadamente no dom\'{\i}nio da aeron\'{a}utica e da din\^{a}mica de voo.
A formaliza\c{c}\~{a}o desta teoria colocou v\'{a}rias quest\~{o}es novas.
Por exemplo, a teoria do controlo \'{o}ptimo motivou a introdu\c{c}\~{a}o
de novos conceitos de solu\c{c}\~{o}es generalizadas
na teoria das equa\c{c}\~{o}es diferenciais e originou novos resultados
de exist\^{e}ncia de traject\'{o}rias.
Regra geral, considera-se que a teoria do controlo \'{o}ptimo surgiu
em finais dos anos cinquenta na antiga Uni\~{a}o Sovi\'{e}tica, em 1956,
com a formula\c{c}\~{a}o e demonstra\c{c}\~{a}o do Princ\'{\i}pio do M\'{a}ximo de
Pontryagin por L.S.~Pontryagin (Figura~\ref{pontryagin}) e pelo
seu grupo de colaboradores: V.G.~Boltyanskii, R.V.~Gamkrelidze
e E.F.~Mishchenko \cite{Pontryagin_et_all}.

\begin{figure}[!htb]
\centering
\includegraphics[scale=0.4]{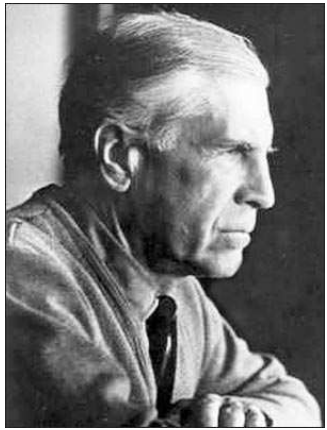}
\caption{Lev Semenovich Pontryagin
(3/Set/1908 -- 3/Maio/1988)}
\label{pontryagin}
\end{figure}

Pontryagin e os seus companheiros introduziram um aspecto
de import\^{a}ncia primordial: generalizaram a teoria
do c\'{a}lculo das varia\c{c}\~{o}es a curvas que tomam valores
em conjuntos fechados (com fronteira). A teoria do controlo \'{o}ptimo
est\'{a} muito ligada \`{a} mec\^{a}nica cl\'{a}ssica, em particular
aos princ\'{\i}pios variacionais (princ\'{\i}pio de Fermat,
equa\c{c}\~{o}es de Euler-Lagrange, etc.)
Na verdade o princ\'{\i}pio do m\'{a}ximo de Pontryagin \'{e} uma generaliza\c{c}\~{a}o
das condi\c{c}\~{o}es necess\'{a}rias de Euler-Lagrange e de Weierstrass.
Alguns pontos fortes da nova teoria foram a descoberta do m\'{e}todo
de programa\c{c}\~{a}o din\^{a}mica, a introdu\c{c}\~{a}o da an\'{a}lise funcional
na teoria dos sistemas \'{o}ptimos e a descoberta de liga\c{c}\~{o}es
entre as solu\c{c}\~{o}es de um problema de controlo \'{o}ptimo
e os resultados da teoria de estabilidade de Lyapunov
\cite{Trelat_2008a,Trelat_2008}. Mais tarde apareceram
as funda\c{c}\~{o}es da teria do controlo estoc\'{a}stico e da
filtragem em sistemas din\^{a}micos, a teoria dos jogos,
o controlo de equa\c{c}\~{o}es com derivadas parciais e os sistemas
de controlo h\'{\i}brido -- algumas de entre as muitas \'{a}reas
de investiga\c{c}\~{a}o actual \cite{MR2005b:93002,leitao,Sontag}.


\section{Controlo \'{o}ptimo linear}
\label{sec:cont:linear}

A teoria do controlo \'{o}ptimo \'{e} muito mais simples quando
o sistema de controlo sob considera\c{c}\~{a}o \'{e} linear.
O controlo \'{o}ptimo n\~{a}o linear ser\'{a} abordado
na Sec\c{c}\~{a}o~\ref{sec:CO:NL}. A teoria linear ainda \'{e},
nos dias de hoje, a mais usada e conhecida nas \'{a}reas
de engenharia e suas aplica\c{c}\~{o}es.


\subsection{Quest\~{o}es centrais}

Seja $A \in \mathcal{M}_n(\R)$
(denotamos por $\mathcal{M}_n(\R)$
o conjunto das matrizes $n \times n$
de entradas reais); $B, X_0 \in \mathcal{M}_{n,1}(\R) \simeq \R^n$;
$I$ um intervalo de $\R$; e $u: \R \to \R$ uma fun\c{c}\~{a}o mensur\'{a}vel ($u \in L^1$) tal que
$u(t) \in I$ $\forall t$.\footnote{Nas aplica\c{c}\~{o}es
considera-se normalmente como classe dos controlos admiss\'{\i}veis
o conjunto dos controlos seccionalmente cont\'{\i}nuos ou mesmo seccionalmente constantes.
Mostra-se que a fam\'{\i}lia de traject\'{o}rias correspondentes aos controlos seccionalmente
constantes \'{e} densa no conjunto de todas as solu\c{c}\~{o}es com controlos mensur\'{a}veis
(\textrm{vide}, \textrm{e.g.}, \cite{livro:Bressan:Piccoli}).} O teorema de exist\^{e}ncia
de solu\c{c}\~{a}o para equa\c{c}\~{o}es diferenciais assegura a exist\^{e}ncia de uma \'{u}nica aplica\c{c}\~{a}o
$\R \ni t \mapsto X(t) \in \R^n$ absolutamente cont\'{\i}nua
($X \in AC$) tal que
\begin{equation}
\label{contsystlin}
\begin{split}
\dot{X}(t) &= A X(t) + B u(t) \quad \forall t \, ,\\
X(0) &= X_0 \, .
\end{split}
\end{equation}
Esta aplica\c{c}\~{a}o depende do controlo $u$. Ao mudarmos a fun\c{c}\~{a}o $u$
obtemos uma outra traject\'{o}ria $t \mapsto X(t)$ em $\R^n$
(Figura~\ref{trajectoria}).

\begin{figure}[!htb]
\begin{center}
\unitlength=.25mm
\makeatletter
\def\shade{\@ifnextchar[{\shade@special}{\@killglue\special{sh}\ignorespaces}}
\def\shade@special[#1]{\@killglue\special{sh #1}\ignorespaces}
\makeatother
\begin{picture}(227,161)(215,-5)
\thinlines
\typeout{\space\space\space eepic-ture exported by 'qfig'.}
\font\FonttenBI=cmbxti10\relax
\font\FonttwlBI=cmbxti10 scaled \magstep1\relax
\path (296,72)(303,66)
\path (296,66)(303,73)
\path (299,69)(300.59,69.31)(302.16,69.64)(303.71,69.99)(305.24,70.36)
(306.75,70.75)(308.24,71.16)(309.71,71.59)(311.16,72.04)(312.59,72.51)
(314,73)(315.165,72.925)(316.36,73)(317.585,73.225)(318.84,73.6)
(320.125,74.125)(321.44,74.8)(322.785,75.625)(324.16,76.6)(325.565,77.725)
(327,79)(328.87,80.785)(330.68,82.64)(332.43,84.565)(334.12,86.56)
(335.75,88.625)(337.32,90.76)(338.83,92.965)(340.28,95.24)(341.67,97.585)
(343,100)(343.55,103.205)(344.2,106.32)(344.95,109.345)(345.8,112.28)
(346.75,115.125)(347.8,117.88)(348.95,120.545)(350.2,123.12)(351.55,125.605)
(353,128)(354.91,130.575)(356.84,133)(358.79,135.275)(360.76,137.4)
(362.75,139.375)(364.76,141.2)(366.79,142.875)(368.84,144.4)(370.91,145.775)
(373,147)(375.11,148.075)(377.24,149)(379.39,149.775)(381.56,150.4)
(383.75,150.875)(385.96,151.2)(388.19,151.375)(390.44,151.4)(392.71,151.275)
(395,151)
\path (299,70)(305.56,68.575)(311.84,67.4)(317.84,66.475)(323.56,65.8)
(329,65.375)(334.16,65.2)(339.04,65.275)(343.64,65.6)(347.96,66.175)
(352,67)(353.69,69.74)(355.56,72.36)(357.61,74.86)(359.84,77.24)
(362.25,79.5)(364.84,81.64)(367.61,83.66)(370.56,85.56)(373.69,87.34)
(377,89)(382.245,90.675)(387.28,92.2)(392.105,93.575)(396.72,94.8)
(401.125,95.875)(405.32,96.8)(409.305,97.575)(413.08,98.2)(416.645,98.675)
(420,99)(423.145,99.175)(426.08,99.2)(428.805,99.075)(431.32,98.8)
(433.625,98.375)(435.72,97.8)(437.605,97.075)(439.28,96.2)(440.745,95.175)
(442,94)
\path (298,70)(298.35,67.785)(298.8,65.64)(299.35,63.565)(300,61.56)
(300.75,59.625)(301.6,57.76)(302.55,55.965)(303.6,54.24)(304.75,52.585)
(306,51)(308.07,49.755)(310.08,48.52)(312.03,47.295)(313.92,46.08)
(315.75,44.875)(317.52,43.68)(319.23,42.495)(320.88,41.32)(322.47,40.155)
(324,39)(325.29,38.395)(326.56,37.68)(327.81,36.855)(329.04,35.92)
(330.25,34.875)(331.44,33.72)(332.61,32.455)(333.76,31.08)(334.89,29.595)
(336,28)(337.09,26.295)(338.16,24.48)(339.21,22.555)(340.24,20.52)
(341.25,18.375)(342.24,16.12)(343.21,13.755)(344.16,11.28)(345.09,8.695)
(346,6)
\path (298,70)(296.01,72.105)(294.04,74.12)(292.09,76.045)(290.16,77.88)
(288.25,79.625)(286.36,81.28)(284.49,82.845)(282.64,84.32)(280.81,85.705)
(279,87)(277.3,87.98)(275.6,88.92)(273.9,89.82)(272.2,90.68)
(270.5,91.5)(268.8,92.28)(267.1,93.02)(265.4,93.72)(263.7,94.38)
(262,95)(260.885,95.715)(259.64,96.36)(258.265,96.935)(256.76,97.44)
(255.125,97.875)(253.36,98.24)(251.465,98.535)(249.44,98.76)(247.285,98.915)
(245,99)(242.585,99.015)(240.04,98.96)(237.365,98.835)(234.56,98.64)
(231.625,98.375)(228.56,98.04)(225.365,97.635)(222.04,97.16)(218.585,96.615)
(215,96)
\path (299,70)(294.86,69.025)(290.84,68)(286.94,66.925)(283.16,65.8)
(279.5,64.625)(275.96,63.4)(272.54,62.125)(269.24,60.8)(266.06,59.425)
(263,58)(259.52,56.525)(256.28,55)(253.28,53.425)(250.52,51.8)
(248,50.125)(245.72,48.4)(243.68,46.625)(241.88,44.8)(240.32,42.925)
(239,41)(238.415,38.665)(237.96,36.36)(237.635,34.085)(237.44,31.84)
(237.375,29.625)(237.44,27.44)(237.635,25.285)(237.96,23.16)(238.415,21.065)
(239,19)(239.715,16.965)(240.56,14.96)(241.535,12.985)(242.64,11.04)
(243.875,9.125)(245.24,7.24)(246.735,5.385)(248.36,3.56)(250.115,1.765)
(252,0)
\put(290,78){{\xiipt\rm {$X_0$}}}
\end{picture}
\end{center}
\caption{a traject\'{o}ria solu\c{c}\~{a}o do sistema de controlo
\eqref{contsystlin} depende da escolha concreta do controlo $u$.}
\label{trajectoria}
\end{figure}

Neste contexto, surgem naturalmente duas quest\~{o}es:

(i) Dado um ponto $X_1 \in \R^n$, existir\'{a} um controlo $u$
tal que a traject\'{o}ria associada a esse controlo liga $X_0$ a $X_1$
em tempo finito $T$? (Figura~\ref{probcontrolabilidade})
\'{E} este o \emph{problema da controlabilidade}.

\begin{figure}[!htb]
\begin{center}
\unitlength=.25mm
\makeatletter
\def\shade{\@ifnextchar[{\shade@special}{\@killglue\special{sh}\ignorespaces}}
\def\shade@special[#1]{\@killglue\special{sh #1}\ignorespaces}
\makeatother
\begin{picture}(321,90)(176,-5)
\thinlines
\typeout{\space\space\space eepic-ture exported by 'qfig'.}
\font\FonttenBI=cmbxti10\relax
\font\FonttwlBI=cmbxti10 scaled \magstep1\relax
\path (201,11)(206,6)
\path (200,6)(206,11)
\path (398,11)(404,6)
\path (397,6)(404,12)
\path (203,9)(204.965,8.76)(206.96,8.64)(208.985,8.64)(211.04,8.76)
(213.125,9)(215.24,9.36)(217.385,9.84)(219.56,10.44)(221.765,11.16)
(224,12)(226.355,13.59)(228.72,15.16)(231.095,16.71)(233.48,18.24)
(235.875,19.75)(238.28,21.24)(240.695,22.71)(243.12,24.16)(245.555,25.59)
(248,27)(250.545,28.66)(253.08,30.24)(255.605,31.74)(258.12,33.16)
(260.625,34.5)(263.12,35.76)(265.605,36.94)(268.08,38.04)(270.545,39.06)
(273,40)(275.31,40.815)(277.64,41.56)(279.99,42.235)(282.36,42.84)
(284.75,43.375)(287.16,43.84)(289.59,44.235)(292.04,44.56)(294.51,44.815)
(297,45)(299.285,45.115)(301.64,45.16)(304.065,45.135)(306.56,45.04)
(309.125,44.875)(311.76,44.64)(314.465,44.335)(317.24,43.96)(320.085,43.515)
(323,43)(326.84,42.145)(330.56,41.28)(334.16,40.405)(337.64,39.52)
(341,38.625)(344.24,37.72)(347.36,36.805)(350.36,35.88)(353.24,34.945)
(356,34)(357.965,33.18)(359.96,32.32)(361.985,31.42)(364.04,30.48)
(366.125,29.5)(368.24,28.48)(370.385,27.42)(372.56,26.32)(374.765,25.18)
(377,24)(379.265,22.78)(381.56,21.52)(383.885,20.22)(386.24,18.88)
(388.625,17.5)(391.04,16.08)(393.485,14.62)(395.96,13.12)(398.465,11.58)
(401,10)
\path (278,54)(280.2,55.005)(282.4,55.92)(284.6,56.745)(286.8,57.48)
(289,58.125)(291.2,58.68)(293.4,59.145)(295.6,59.52)(297.8,59.805)
(300,60)(302.2,60.105)(304.4,60.12)(306.6,60.045)(308.8,59.88)
(311,59.625)(313.2,59.28)(315.4,58.845)(317.6,58.32)(319.8,57.705)
(322,57)
\path (315.371,61.478)(322,57)(314.043,56.169)
\put(290,68){{\xiipt\rm {$X(t)$}}}
\put(409,2){{\xiipt\rm {$X_1=X(T)$}}}
\put(176,-1){{\xiipt\rm {$X_0$}}}
\end{picture}
\end{center}
\caption{problema da controlabilidade.}
\label{probcontrolabilidade}
\end{figure}

(ii) Assegurada a controlabilidade (quest\~{a}o anterior),
existir\'{a} um controlo que \emph{minimiza o tempo
de percurso de $X_0$ at\'{e} $X_1$}?
(Figura~\ref{probcontroloptimo}) Temos ent\~{a}o um problema
de controlo \'{o}ptimo (de tempo m\'{\i}nimo).

\begin{figure}[!htb]
\begin{center}
\unitlength=.25mm
\makeatletter
\def\shade{\@ifnextchar[{\shade@special}{\@killglue\special{sh}\ignorespaces}}
\def\shade@special[#1]{\@killglue\special{sh #1}\ignorespaces}
\makeatother
\begin{picture}(321,97)(176,-5)
\thinlines
\typeout{\space\space\space eepic-ture exported by 'qfig'.}
\font\FonttenBI=cmbxti10\relax
\font\FonttwlBI=cmbxti10 scaled \magstep1\relax
\path (201,49)(206,44)
\path (200,44)(206,49)
\path (398,49)(404,44)
\path (397,44)(404,50)
\path (203,47)(204.965,46.76)(206.96,46.64)(208.985,46.64)(211.04,46.76)
(213.125,47)(215.24,47.36)(217.385,47.84)(219.56,48.44)(221.765,49.16)
(224,50)(226.355,51.59)(228.72,53.16)(231.095,54.71)(233.48,56.24)
(235.875,57.75)(238.28,59.24)(240.695,60.71)(243.12,62.16)(245.555,63.59)
(248,65)(250.545,66.66)(253.08,68.24)(255.605,69.74)(258.12,71.16)
(260.625,72.5)(263.12,73.76)(265.605,74.94)(268.08,76.04)(270.545,77.06)
(273,78)(275.31,78.815)(277.64,79.56)(279.99,80.235)(282.36,80.84)
(284.75,81.375)(287.16,81.84)(289.59,82.235)(292.04,82.56)(294.51,82.815)
(297,83)(299.285,83.115)(301.64,83.16)(304.065,83.135)(306.56,83.04)
(309.125,82.875)(311.76,82.64)(314.465,82.335)(317.24,81.96)(320.085,81.515)
(323,81)(326.84,80.145)(330.56,79.28)(334.16,78.405)(337.64,77.52)
(341,76.625)(344.24,75.72)(347.36,74.805)(350.36,73.88)(353.24,72.945)
(356,72)(357.965,71.18)(359.96,70.32)(361.985,69.42)(364.04,68.48)
(366.125,67.5)(368.24,66.48)(370.385,65.42)(372.56,64.32)(374.765,63.18)
(377,62)(379.265,60.78)(381.56,59.52)(383.885,58.22)(386.24,56.88)
(388.625,55.5)(391.04,54.08)(393.485,52.62)(395.96,51.12)(398.465,49.58)
(401,48)
\put(409,40){{\xiipt\rm {$X_1=X(T)$}}}
\put(176,37){{\xiipt\rm {$X_0$}}}
\path (203,47)(204.4,46.49)(206,45.96)(207.8,45.41)(209.8,44.84)
(212,44.25)(214.4,43.64)(217,43.01)(219.8,42.36)(222.8,41.69)
(226,41)(231.11,39.705)(236.04,38.52)(240.79,37.445)(245.36,36.48)
(249.75,35.625)(253.96,34.88)(257.99,34.245)(261.84,33.72)(265.51,33.305)
(269,33)(271.77,33.255)(274.48,33.52)(277.13,33.795)(279.72,34.08)
(282.25,34.375)(284.72,34.68)(287.13,34.995)(289.48,35.32)(291.77,35.655)
(294,36)(295.675,36.085)(297.4,36.24)(299.175,36.465)(301,36.76)
(302.875,37.125)(304.8,37.56)(306.775,38.065)(308.8,38.64)(310.875,39.285)
(313,40)(315.04,41.595)(317.16,43.08)(319.36,44.455)(321.64,45.72)
(324,46.875)(326.44,47.92)(328.96,48.855)(331.56,49.68)(334.24,50.395)
(337,51)(340.875,51.225)(344.6,51.4)(348.175,51.525)(351.6,51.6)
(354.875,51.625)(358,51.6)(360.975,51.525)(363.8,51.4)(366.475,51.225)
(369,51)(370.79,50.23)(372.56,49.52)(374.31,48.87)(376.04,48.28)
(377.75,47.75)(379.44,47.28)(381.11,46.87)(382.76,46.52)(384.39,46.23)
(386,46)(387.59,45.83)(389.16,45.72)(390.71,45.67)(392.24,45.68)
(393.75,45.75)(395.24,45.88)(396.71,46.07)(398.16,46.32)(399.59,46.63)
(401,47)
\path (203,47)(203.825,45.1)(204.8,43.2)(205.925,41.3)(207.2,39.4)
(208.625,37.5)(210.2,35.6)(211.925,33.7)(213.8,31.8)(215.825,29.9)
(218,28)(220.595,25.47)(223.28,23.08)(226.055,20.83)(228.92,18.72)
(231.875,16.75)(234.92,14.92)(238.055,13.23)(241.28,11.68)(244.595,10.27)
(248,9)(251.585,7.915)(255.24,6.96)(258.965,6.135)(262.76,5.44)
(266.625,4.875)(270.56,4.44)(274.565,4.135)(278.64,3.96)(282.785,3.915)
(287,4)(291.96,4.665)(296.84,5.36)(301.64,6.085)(306.36,6.84)
(311,7.625)(315.56,8.44)(320.04,9.285)(324.44,10.16)(328.76,11.065)
(333,12)(337.16,12.515)(341.24,13.16)(345.24,13.935)(349.16,14.84)
(353,15.875)(356.76,17.04)(360.44,18.335)(364.04,19.76)(367.56,21.315)
(371,23)(374.36,24.815)(377.64,26.76)(380.84,28.835)(383.96,31.04)
(387,33.375)(389.96,35.84)(392.84,38.435)(395.64,41.16)(398.36,44.015)
(401,47)
\path (298,87)(306,83)(298,80)
\path (289,9)(296,5)(289,0)
\path (279,39)(287,35)(279,32)
\end{picture}
\end{center}
\caption{problema do tempo m\'{\i}nimo.}
\label{probcontroloptimo}
\end{figure}

Os teoremas que se seguem respondem a estas quest\~{o}es.
As respectivas demonstra\c{c}\~{o}es s\~{a}o bem conhecidas
e podem facilmente ser encontradas na literatura
(\textrm{vide}, \textrm{e.g.}, \cite{Lee_Markus,MR0638591,Trelat_book1}).


\subsection{Conjunto acess\'{\i}vel}

Considerando o sistema linear de controlo \eqref{contsystlin}
come\c{c}amos por introduzir um conjunto de grande import\^{a}ncia:
\emph{o conjunto acess\'{\i}vel}.

\begin{definition}
O conjunto dos pontos acess\'{\i}veis a partir de $X_0$
em tempo $T > 0$ \'{e} denotado e definido por
\begin{equation*}
\begin{split}
A(X_0, T) = \{ X_1 \in \R^n \, | \, &\exists u
\in L^{1}([0, T],I),\\
&\exists X : \R \to \R^n \, \in AC \text{ com } X(0) = X_0, \\
&\forall t \in [0, T] \, \, \dot{X}(t) = A X(t) + B u(t), \, \, X(T) = X_1 \} \, .
\end{split}
\end{equation*}
\end{definition}

Por palavras, $A(X_0, T)$ \'{e} o conjunto das extremidades das solu\c{c}\~{o}es
de \eqref{contsystlin} em tempo $T$, quando fazemos variar
o controlo $u$ (Figura \ref{imgacceset}).

\begin{figure}[!htb]
\begin{center}
\unitlength=.25mm
\makeatletter
\def\shade{\@ifnextchar[{\shade@special}{\@killglue\special{sh}\ignorespaces}}
\def\shade@special[#1]{\@killglue\special{sh #1}\ignorespaces}
\makeatother
\begin{picture}(277,170)(210,-5)
\thinlines
\typeout{\space\space\space eepic-ture exported by 'qfig'.}
\font\FonttenBI=cmbxti10\relax
\font\FonttwlBI=cmbxti10 scaled \magstep1\relax
\path (296,77)(303,71)
\path (296,71)(303,78)
\path (299,74)(300.59,74.31)(302.16,74.64)(303.71,74.99)(305.24,75.36)
(306.75,75.75)(308.24,76.16)(309.71,76.59)(311.16,77.04)(312.59,77.51)
(314,78)(315.165,77.925)(316.36,78)(317.585,78.225)(318.84,78.6)
(320.125,79.125)(321.44,79.8)(322.785,80.625)(324.16,81.6)(325.565,82.725)
(327,84)(328.87,85.785)(330.68,87.64)(332.43,89.565)(334.12,91.56)
(335.75,93.625)(337.32,95.76)(338.83,97.965)(340.28,100.24)(341.67,102.585)
(343,105)(343.55,108.205)(344.2,111.32)(344.95,114.345)(345.8,117.28)
(346.75,120.125)(347.8,122.88)(348.95,125.545)(350.2,128.12)(351.55,130.605)
(353,133)(354.91,135.575)(356.84,138)(358.79,140.275)(360.76,142.4)
(362.75,144.375)(364.76,146.2)(366.79,147.875)(368.84,149.4)(370.91,150.775)
(373,152)(375.11,153.075)(377.24,154)(379.39,154.775)(381.56,155.4)
(383.75,155.875)(385.96,156.2)(388.19,156.375)(390.44,156.4)(392.71,156.275)
(395,156)
\path (299,75)(305.56,73.575)(311.84,72.4)(317.84,71.475)(323.56,70.8)
(329,70.375)(334.16,70.2)(339.04,70.275)(343.64,70.6)(347.96,71.175)
(352,72)(353.69,74.74)(355.56,77.36)(357.61,79.86)(359.84,82.24)
(362.25,84.5)(364.84,86.64)(367.61,88.66)(370.56,90.56)(373.69,92.34)
(377,94)(382.245,95.675)(387.28,97.2)(392.105,98.575)(396.72,99.8)
(401.125,100.875)(405.32,101.8)(409.305,102.575)(413.08,103.2)(416.645,103.675)
(420,104)(423.145,104.175)(426.08,104.2)(428.805,104.075)(431.32,103.8)
(433.625,103.375)(435.72,102.8)(437.605,102.075)(439.28,101.2)(440.745,100.175)
(442,99)
\path (298,75)(298.35,72.785)(298.8,70.64)(299.35,68.565)(300,66.56)
(300.75,64.625)(301.6,62.76)(302.55,60.965)(303.6,59.24)(304.75,57.585)
(306,56)(308.07,54.755)(310.08,53.52)(312.03,52.295)(313.92,51.08)
(315.75,49.875)(317.52,48.68)(319.23,47.495)(320.88,46.32)(322.47,45.155)
(324,44)(325.29,43.395)(326.56,42.68)(327.81,41.855)(329.04,40.92)
(330.25,39.875)(331.44,38.72)(332.61,37.455)(333.76,36.08)(334.89,34.595)
(336,33)(337.09,31.295)(338.16,29.48)(339.21,27.555)(340.24,25.52)
(341.25,23.375)(342.24,21.12)(343.21,18.755)(344.16,16.28)(345.09,13.695)
(346,11)
\path (298,75)(296.01,77.105)(294.04,79.12)(292.09,81.045)(290.16,82.88)
(288.25,84.625)(286.36,86.28)(284.49,87.845)(282.64,89.32)(280.81,90.705)
(279,92)(277.3,92.98)(275.6,93.92)(273.9,94.82)(272.2,95.68)
(270.5,96.5)(268.8,97.28)(267.1,98.02)(265.4,98.72)(263.7,99.38)
(262,100)(260.885,100.715)(259.64,101.36)(258.265,101.935)(256.76,102.44)
(255.125,102.875)(253.36,103.24)(251.465,103.535)(249.44,103.76)(247.285,103.915)
(245,104)(242.585,104.015)(240.04,103.96)(237.365,103.835)(234.56,103.64)
(231.625,103.375)(228.56,103.04)(225.365,102.635)(222.04,102.16)(218.585,101.615)
(215,101)
\path (299,75)(294.86,74.025)(290.84,73)(286.94,71.925)(283.16,70.8)
(279.5,69.625)(275.96,68.4)(272.54,67.125)(269.24,65.8)(266.06,64.425)
(263,63)(259.52,61.525)(256.28,60)(253.28,58.425)(250.52,56.8)
(248,55.125)(245.72,53.4)(243.68,51.625)(241.88,49.8)(240.32,47.925)
(239,46)(238.415,43.665)(237.96,41.36)(237.635,39.085)(237.44,36.84)
(237.375,34.625)(237.44,32.44)(237.635,30.285)(237.96,28.16)(238.415,26.065)
(239,24)(239.715,21.965)(240.56,19.96)(241.535,17.985)(242.64,16.04)
(243.875,14.125)(245.24,12.24)(246.735,10.385)(248.36,8.56)(250.115,6.765)
(252,5)
\put(290,83){{\xiipt\rm {$X_0$}}}
\path (252,5)(257.65,3.78)(263.2,2.72)(268.65,1.82)(274,1.08)
(279.25,.5)(284.4,.08)(289.45,-.18)(294.4,-.28)(299.25,-.22)
(304,0)(307.48,.47)(311.12,1.08)(314.92,1.83)(318.88,2.72)
(323,3.75)(327.28,4.92)(331.72,6.23)(336.32,7.68)(341.08,9.27)
(346,11)(353.51,13.68)(360.64,16.32)(367.39,18.92)(373.76,21.48)
(379.75,24)(385.36,26.48)(390.59,28.92)(395.44,31.32)(399.91,33.68)
(404,36)(406.09,37.2)(408.16,38.6)(410.21,40.2)(412.24,42)
(414.25,44)(416.24,46.2)(418.21,48.6)(420.16,51.2)(422.09,54)
(424,57)(427.375,61.28)(430.4,65.52)(433.075,69.72)(435.4,73.88)
(437.375,78)(439,82.08)(440.275,86.12)(441.2,90.12)(441.775,94.08)
(442,98)(440.885,102.42)(439.64,106.68)(438.265,110.78)(436.76,114.72)
(435.125,118.5)(433.36,122.12)(431.465,125.58)(429.44,128.88)(427.285,132.02)
(425,135)(422.045,137.865)(419.08,140.56)(416.105,143.085)(413.12,145.44)
(410.125,147.625)(407.12,149.64)(404.105,151.485)(401.08,153.16)(398.045,154.665)
(395,156)(393.61,156.85)(391.84,157.6)(389.69,158.25)(387.16,158.8)
(384.25,159.25)(380.96,159.6)(377.29,159.85)(373.24,160)(368.81,160.05)
(364,160)(356.515,160.165)(349.16,160.16)(341.935,159.985)(334.84,159.64)
(327.875,159.125)(321.04,158.44)(314.335,157.585)(307.76,156.56)(301.315,155.365)
(295,154)(288.005,152.015)(281.32,149.96)(274.945,147.835)(268.88,145.64)
(263.125,143.375)(257.68,141.04)(252.545,138.635)(247.72,136.16)(243.205,133.615)
(239,131)(235.51,128.315)(232.24,125.56)(229.19,122.735)(226.36,119.84)
(223.75,116.875)(221.36,113.84)(219.19,110.735)(217.24,107.56)(215.51,104.315)
(214,101)(212.935,97.255)(212.04,93.52)(211.315,89.795)(210.76,86.08)
(210.375,82.375)(210.16,78.68)(210.115,74.995)(210.24,71.32)(210.535,67.655)
(211,64)(211.86,59.77)(212.84,55.68)(213.94,51.73)(215.16,47.92)
(216.5,44.25)(217.96,40.72)(219.54,37.33)(221.24,34.08)(223.06,30.97)
(225,28)(227.06,25.17)(229.24,22.48)(231.54,19.93)(233.96,17.52)
(236.5,15.25)(239.16,13.12)(241.94,11.13)(244.84,9.28)(247.86,7.57)
(251,6)
\put(399,19){{\xiipt\rm {$A(X_0,T)$}}}
\end{picture}
\end{center}
\caption{conjunto acess\'{\i}vel.}
\label{imgacceset}
\end{figure}

\begin{theorem}
\label{thm:1}
Sejam $T > 0$, $I$ compacto e $X_0 \in \R^n$. Ent\~{a}o para todo o
$t \in [0, T]$, $A(X_0, t)$ \'{e} compacto, convexo e varia continuamente com $t$ em $[0, T]$.
\end{theorem}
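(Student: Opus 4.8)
The plan is to base everything on the variation-of-constants (Duhamel) formula. Writing the unique absolutely continuous solution of \eqref{contsystlin} associated with a control $u$ as
\begin{equation*}
X_u(t) = e^{tA} X_0 + \int_0^t e^{(t-s)A} B\, u(s)\, ds,
\end{equation*}
the accessible set is exactly the image $A(X_0,t) = \{ X_u(t) : u \in L^1([0,t],I)\}$ of the affine map $u \mapsto X_u(t)$. Since $I$ is an interval, being compact it has the form $I = [a,b]$; in particular $I$ is convex, which I will use repeatedly. Convexity of $A(X_0,t)$ is then the quick part: given $X_u(t), X_v(t)$ and $\lambda \in [0,1]$, the control $w = \lambda u + (1-\lambda) v$ is admissible because $I$ is convex, and by linearity of the integral $X_w(t) = \lambda X_u(t) + (1-\lambda) X_v(t)$, so the convex combination lies in $A(X_0,t)$.

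For compactness, boundedness is immediate: with $M = \max_{r \in I} |r|$ and $C_T = \sup_{0 \le s \le T} \|e^{sA}\|$, the integral term is dominated by $C_T |B| M\, t$ uniformly in $u$. Closedness is the delicate point and the main obstacle. Given a sequence $X_{u_k}(t) \to Y$, the controls $u_k$ are uniformly bounded in $L^\infty([0,t])$, hence bounded in the Hilbert space $L^2([0,t])$; by reflexivity a subsequence converges weakly, $u_{k_j} \rightharpoonup u$. The admissible set $\{v \in L^2 : v(s) \in [a,b] \text{ a.e.}\}$ is convex and strongly closed, hence weakly closed by Mazur's theorem, so $u$ is again admissible. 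Finally $u \mapsto \int_0^t e^{(t-s)A} B u(s)\, ds$ is a bounded linear map $L^2([0,t]) \to \R^n$ (its kernel $s \mapsto e^{(t-s)A}B$ lies in $L^2$), therefore weak-to-norm continuous, giving $X_{u_{k_j}}(t) \to X_u(t)$; comparing with $X_{u_k}(t) \to Y$ forces $Y = X_u(t) \in A(X_0,t)$.

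It remains to establish continuity in $t$, which I would state as Lipschitz continuity for the Hausdorff distance. Writing $X_u(t) = e^{tA} V_u(t)$ with $V_u(t) = X_0 + \int_0^t e^{-sA} B u(s)\, ds$, a direct estimate splitting $e^{tA}V_u(t) - e^{t'A}V_u(t')$ into $e^{tA}(V_u(t)-V_u(t'))$ and $(e^{tA}-e^{t'A})V_u(t')$ yields $|X_u(t) - X_u(t')| \le K |t - t'|$ for a constant $K$ depending only on $T, A, B, M$ and independent of $u$, since $V_u$ and $t \mapsto e^{tA}$ are Lipschitz on $[0,T]$ with uniformly bounded data. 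To bound $d_H(A(X_0,t), A(X_0,t'))$ I associate to each point of one set a point of the other by restricting the generating control (when time shrinks) or extending it by an arbitrary admissible value (when time grows); the estimate above controls the displacement by $K|t-t'|$. As this works in both directions, $d_H(A(X_0,t),A(X_0,t')) \le K |t - t'|$, which gives the claimed continuity.

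I expect the closedness step to be the crux: it is the only place where a genuine compactness argument (weak $L^2$ convergence together with Mazur's theorem) is needed, whereas convexity and the time-continuity estimate are essentially bookkeeping on the Duhamel formula and on the uniform bound provided by the compactness of $I$.
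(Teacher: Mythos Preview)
The paper does not actually prove this theorem: it is stated without proof in \S3.2, and the sentence closing \S3.1 explicitly sends the reader to the literature (``As respectivas demonstra\c{c}\~{o}es s\~{a}o bem conhecidas e podem facilmente ser encontradas na literatura'', citing Lee--Markus, Macki--Strauss, and Tr\'elat). So there is no in-paper argument to compare against.

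Your proof is correct and is essentially the standard one found in those references. Convexity and boundedness are routine once the Duhamel formula is written down, and you correctly identify closedness as the only substantive step. Your route via weak $L^2$-compactness plus Mazur's theorem is sound; the textbook presentations more often invoke weak-$*$ sequential compactness in $L^\infty$ (Banach--Alaoglu), but since the admissible controls are uniformly bounded the two arguments are interchangeable here, and yours has the small advantage of staying in a Hilbert setting. The Hausdorff--Lipschitz estimate you give for the $t$-dependence is also correct (the extension/restriction trick to match points between $A(X_0,t)$ and $A(X_0,t')$ is exactly what is needed) and is in fact slightly stronger than the bare continuity asserted in the statement. One cosmetic point: at $t=0$ the set reduces to $\{X_0\}$, and your Lipschitz bound already covers continuity there, so no separate treatment is required.
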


A solu\c{c}\~{a}o de
\begin{equation}
\label{eq:SL0}
\begin{cases}
\dot{X} = AX + Bu\\
X(0) = X_0
\end{cases}
\end{equation}
\'{e}
$$X(t) = e^{tA} + e^{tA} \int_0^t e^{-sA} Bu(s) ds \, .$$
Constatamos que se $X_0 = 0$, \textrm{i.e.}, se partirmos da origem,
ent\~{a}o a express\~{a}o de $X(t)$ \'{e} simplificada:
$X(t) = e^{tA} \int_{0}^{t} e^{-s A} Bu(s) ds$ \'{e} linear em $u$.
Esta observa\c{c}\~{a}o leva-nos \`{a} seguinte proposi\c{c}\~{a}o.

\begin{proposition}
Suponhamos que $X_0 = 0$ e $I = \R$. Ent\~{a}o,
\begin{itemize}
\item[1.] $\forall \, T > 0 \, \, \, A(0, T)$
\'{e} um sub-espa\c{c}o vectorial de $\R^n$. Al\'{e}m disso,
\item[2.] $0 < T_1 < T_2 \Rightarrow A(0, T_1) \subset A (0, T_2)$.
\end{itemize}
\end{proposition}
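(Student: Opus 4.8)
The plan is to exploit the explicit solution formula, which for $X_0 = 0$ turns the endpoint $X(T)$ into a linear functional of the control. With $X_0 = 0$ the variation-of-constants formula gives
\[
X(T) = e^{TA}\int_0^T e^{-sA} B\,u(s)\, ds,
\]
so $A(0,T)$ is precisely the image of the linear map $\Phi_T : L^1([0,T],\R) \to \R^n$ defined by $\Phi_T(u) = e^{TA}\int_0^T e^{-sA} B\,u(s)\, ds$. Both items then become structural statements about this map.

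For item~1, I would simply observe that the image of a linear map defined on a vector space is a vector subspace. Concretely: the control $u \equiv 0$ is admissible (since $0 \in I = \R$) and yields $0 \in A(0,T)$; if $X_i = \Phi_T(u_i)$ for $i=1,2$, then $X_1 + X_2 = \Phi_T(u_1+u_2)$ and $\lambda X_1 = \Phi_T(\lambda u_1)$, where $u_1+u_2$ and $\lambda u_1$ are again admissible precisely because $I = \R$ imposes no pointwise constraint and $L^1([0,T],\R)$ is closed under sums and scalar multiples. This is the single place where the hypothesis $I=\R$ is truly essential: for a bounded $I$ the set $A(0,T)$ would in general fail to be closed under scaling, hence fail to be a subspace.

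For item~2, given $X_1 \in A(0,T_1)$ realized by a control $u_1 \in L^1([0,T_1],\R)$, I would construct an admissible control on $[0,T_2]$ that first keeps the system at the equilibrium $0$ during the extra time $T_2-T_1$ and then replays $u_1$. Define
\[
\tilde u(s) = \begin{cases} 0, & 0 \le s \le T_2 - T_1,\\ u_1\bigl(s-(T_2-T_1)\bigr), & T_2 - T_1 < s \le T_2. \end{cases}
\]
Since the origin is an equilibrium under the zero control (because $A\cdot 0 = 0$), the trajectory stays at $0$ on $[0,T_2-T_1]$; the remaining computation is a change of variable $\sigma = s-(T_2-T_1)$ combined with the semigroup identity $e^{T_2 A}\,e^{-(T_2-T_1)A} = e^{T_1 A}$, which gives $\Phi_{T_2}(\tilde u) = \Phi_{T_1}(u_1) = X_1$, so that $X_1 \in A(0,T_2)$.

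The argument is essentially bookkeeping, and I expect no genuine obstacle. The only point that deserves care is the time-invariance step in item~2, that is, recognizing that ``waiting at the origin costs nothing'' and that a shifted control reproduces the same endpoint; this is exactly what the identity $e^{T_2 A}\,e^{-(T_2-T_1)A} = e^{T_1 A}$ encodes. Beyond that, the main thing to state explicitly is why admissibility is preserved throughout: one uses $0 \in I$ for the waiting phase of item~2, and the full strength $I = \R$ for the closure properties of item~1.
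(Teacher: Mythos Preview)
Your argument is correct on both items. For item~1, the observation that $u \mapsto e^{TA}\int_0^T e^{-sA}Bu(s)\,ds$ is linear and that the admissible controls form a vector space (precisely because $I=\R$) is exactly the right mechanism, and the paper itself singles out this linearity just before the proposition. For item~2, your ``wait at the origin, then replay $u_1$'' construction is clean; the change of variable and the semigroup identity $e^{T_2 A}e^{-(T_2-T_1)A}=e^{T_1 A}$ do the job, and you correctly flag that only $0\in I$ is needed for the waiting phase while the full $I=\R$ is needed in item~1.

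As for comparison with the paper: there is nothing to compare against. The paper states this proposition without proof, referring the reader to standard references (\emph{``As respectivas demonstra\c{c}\~{o}es s\~{a}o bem conhecidas e podem facilmente ser encontradas na literatura''}). Your write-up would serve perfectly well as the omitted proof.
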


\begin{definition}
O conjunto $A(0) = \cup_{t \geq 0} A(0, T)$ \'{e} o conjunto
dos pontos acess\'{\i}veis (num tempo qualquer) a partir da origem.
\end{definition}

\begin{corollary}
O conjunto $A(0)$ \'{e} um sub-espa\c{c}o vectorial de $\R^n$.
\end{corollary}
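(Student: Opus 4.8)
The plan is to read the result directly off the preceding proposition, whose two parts supply exactly the structure needed. Part~1 tells us that each individual set $A(0,t)$, for $t>0$, is a vector subspace of $\R^n$, and part~2 tells us that the family $\{A(0,t)\}_{t>0}$ is increasing, in the sense that $A(0,T_1)\subset A(0,T_2)$ whenever $0<T_1<T_2$. Thus $A(0)=\bigcup_{t\ge 0}A(0,t)$ is a nested union (a union of a chain, totally ordered by inclusion) of subspaces. The whole point is that, while an arbitrary union of subspaces is almost never a subspace, a \emph{chain} of subspaces has a union that is again a subspace; I would make this concrete by verifying the three subspace axioms.

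First I would note that $A(0)$ is nonempty and contains the origin: every $A(0,t)$ is a subspace, hence $0\in A(0,t)\subset A(0)$. Next, closure under scalar multiplication is immediate, since if $x\in A(0)$ then $x\in A(0,T)$ for some $T>0$, and $\lambda x\in A(0,T)\subset A(0)$ because $A(0,T)$ is a subspace.

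The only step that actually uses the monotonicity is closure under addition, and this is where I expect the (very mild) obstacle to lie: given $x,y\in A(0)$, they need not come from the \emph{same} member of the family, so we only know $x\in A(0,T_1)$ and $y\in A(0,T_2)$ with possibly $T_1\ne T_2$. Setting $T=\max\{T_1,T_2\}$ and invoking part~2 of the proposition places both $x$ and $y$ inside the single subspace $A(0,T)$; then $x+y\in A(0,T)\subset A(0)$. With the three axioms verified, $A(0)$ is a subspace of $\R^n$. As a remark, one could even shortcut this: since $\{A(0,t)\}$ is an increasing chain of subspaces of the finite-dimensional space $\R^n$, their dimensions are nondecreasing and bounded by $n$, so the chain stabilizes and $A(0)=A(0,T)$ for all sufficiently large $T$, whence the conclusion follows from part~1 alone. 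This extra observation is not needed for the statement, but it clarifies why no genuine difficulty arises.
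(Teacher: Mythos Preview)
Your proof is correct and is precisely the argument the paper intends: the corollary is stated without proof, as an immediate consequence of the two parts of the preceding proposition, and your verification of the subspace axioms via the nested-union observation (together with the optional finite-dimensional stabilization remark) makes that implicit argument explicit.
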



\subsection{Controlabilidade}

O sistema de controlo $\dot{X} = AX + Bu$ diz-se control\'{a}vel
se para todo o $X_0, X_1 \in \R^n$ existe um controlo $u$
tal que a traject\'{o}ria associada une $X_0$ a $X_1$ em tempo finito $T$
(Figura~\ref{controlabilidade}). De modo mais formal temos:

\begin{definition}
O sistema de controlo $\dot{X} = AX + Bu$ diz-se control\'{a}vel se
$$\forall \, X_0, X_1 \in \R^n \hspace{0.3cm} \exists T> 0
\hspace{0.3cm} \exists u : [0, T] \to I \, \in L^1$$
\begin{equation*}
\exists X : [0, T] \to \R^n \, \, | \, \,
\begin{cases}
\dot{X} = AX + Bu \, ,\\
X(0) = X_0 \, ,\\
X(T) = X_1 \, .
\end{cases}
\end{equation*}
\end{definition}

\begin{figure}[!htb]
\begin{center}
\unitlength=.25mm
\makeatletter
\def\shade{\@ifnextchar[{\shade@special}{\@killglue\special{sh}\ignorespaces}}
\def\shade@special[#1]{\@killglue\special{sh #1}\ignorespaces}
\makeatother
\begin{picture}(292,40)(165,-5)
\thinlines
\typeout{\space\space\space eepic-ture exported by 'qfig'.}
\font\FonttenBI=cmbxti10\relax
\font\FonttwlBI=cmbxti10 scaled \magstep1\relax
\path (197,12)(202,7)
\path (196,7)(202,12)
\path (397,12)(402,7)
\path (395,7)(403,12)
\path (199,10)(200.875,9.785)(202.8,9.64)(204.775,9.565)(206.8,9.56)
(208.875,9.625)(211,9.76)(213.175,9.965)(215.4,10.24)(217.675,10.585)
(220,11)(223.005,11.935)(225.92,12.84)(228.745,13.715)(231.48,14.56)
(234.125,15.375)(236.68,16.16)(239.145,16.915)(241.52,17.64)(243.805,18.335)
(246,19)(247.7,19.635)(249.4,20.24)(251.1,20.815)(252.8,21.36)
(254.5,21.875)(256.2,22.36)(257.9,22.815)(259.6,23.24)(261.3,23.635)
(263,24)(263.935,24.47)(265.04,24.88)(266.315,25.23)(267.76,25.52)
(269.375,25.75)(271.16,25.92)(273.115,26.03)(275.24,26.08)(277.535,26.07)
(280,26)(283.49,25.78)(286.96,25.52)(290.41,25.22)(293.84,24.88)
(297.25,24.5)(300.64,24.08)(304.01,23.62)(307.36,23.12)(310.69,22.58)
(314,22)(317.38,21.47)(320.72,20.88)(324.02,20.23)(327.28,19.52)
(330.5,18.75)(333.68,17.92)(336.82,17.03)(339.92,16.08)(342.98,15.07)
(346,14)(348.935,11.52)(351.84,9.28)(354.715,7.28)(357.56,5.52)
(360.375,4)(363.16,2.72)(365.915,1.68)(368.64,.88)(371.335,.32)
(374,0)(376.635,-.08)(379.24,.08)(381.815,.48)(384.36,1.12)
(386.875,2)(389.36,3.12)(391.815,4.48)(394.24,6.08)(396.635,7.92)
(399,10)
\path (272,30)(278,26)(272,23)
\put(409,4){{\xiipt\rm {$X_1$}}}
\put(165,1){{\xiipt\rm {$X_0$}}}
\end{picture}
\end{center}
\caption{controlabilidade.}
\label{controlabilidade}
\end{figure}

O teorema seguinte d\'{a}-nos uma condi\c{c}\~{a}o necess\'{a}ria
e suficiente de controlabilidade
chamada \emph{condi\c{c}\~{a}o de Kalman}.

\begin{theorem}[Condi\c{c}\~{a}o de Kalman]
\label{condKalman}
O sistema $\dot{X} = AX + Bu$ \'{e} control\'{a}vel
se e somente se a matriz $C = (B | AB | \cdots | A^{n-1}B)$
tiver caracter\'{\i}stica completa
(\textrm{i.e.}, $rank(C) = n$).
\end{theorem}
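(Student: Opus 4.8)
The plan is to reduce the statement to a computation of the reachable set from the origin and then to identify that set with the image of the Kalman matrix $C$. Throughout I work with unconstrained controls ($I = \R$), which is the setting in which the accessible set $A(0,T)$ is a linear subspace, as established in the proposition above. First I would write the solution of $\dot X = AX + Bu$ with $X(0)=X_0$ through the variation of constants formula
\begin{equation*}
X(T) = e^{TA}X_0 + \int_0^T e^{(T-s)A} B\, u(s)\, ds,
\end{equation*}
and observe that steering $X_0$ to a prescribed $X_1$ in time $T$ amounts to solving $X_1 - e^{TA}X_0 = \int_0^T e^{(T-s)A}Bu(s)\,ds$ for $u$. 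Since $X_1 - e^{TA}X_0$ ranges over all of $\R^n$ as $X_1$ does, the system is controllable if and only if the linear map $u \mapsto \int_0^T e^{(T-s)A}Bu(s)\,ds$ is onto $\R^n$ for some $T>0$; equivalently, $A(0,T) = \R^n$ for some $T>0$.

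The core step is to compute the subspace $A(0,T)$ by describing its orthogonal complement. A vector $p \in \R^n$ annihilates $A(0,T)$ exactly when $p^{\top}\int_0^T e^{(T-s)A}Bu(s)\,ds = 0$ for every admissible $u$. Testing against $u(s) = p^{\top}e^{(T-s)A}B$ shows this forces the continuous scalar function $s \mapsto p^{\top}e^{(T-s)A}B$ to vanish identically on $[0,T]$. Hence $p \perp A(0,T)$ if and only if $p^{\top}e^{tA}B = 0$ for all $t$. Differentiating repeatedly at $t=0$ gives $p^{\top}A^k B = 0$ for every $k \ge 0$, and in particular for $k = 0,\dots,n-1$, i.e.\ $p$ is orthogonal to every column of $C$. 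Conversely, if $p^{\top}A^kB = 0$ for $k = 0,\dots,n-1$, then Cayley--Hamilton (which expresses $A^k$ for $k \ge n$ as a linear combination of $I,A,\dots,A^{n-1}$) yields $p^{\top}A^kB=0$ for all $k$, whence $p^{\top}e^{tA}B = \sum_{k\ge 0}\frac{t^k}{k!}p^{\top}A^kB = 0$. Therefore the annihilator of $A(0,T)$ coincides with the annihilator of the column space $\mathrm{Im}(C)$, so $A(0,T) = \mathrm{Im}(C)$ for every $T>0$.

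Combining the two steps, controllability is equivalent to $\mathrm{Im}(C) = \R^n$, that is, to $\mathrm{rank}(C) = n$, which is the Kalman condition. The main obstacle I anticipate is the orthogonality argument in the second step: one must justify rigorously that no nonzero $p$ can annihilate the whole reachable set unless $s \mapsto p^{\top}e^{(T-s)A}B$ vanishes identically. The choice $u(s)=p^{\top}e^{(T-s)A}B$ makes $p^{\top}\int_0^T e^{(T-s)A}Bu(s)\,ds = \int_0^T\bigl(p^{\top}e^{(T-s)A}B\bigr)^2 ds$, which is strictly positive unless the integrand is zero almost everywhere; continuity then upgrades this to vanishing everywhere, closing the gap. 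A secondary point requiring care is the reduction to $X_0 = 0$ and the role of the control constraint set $I$: the equivalence as stated presumes $I = \R$ (or at least $0 \in \mathrm{int}\, I$), consistent with the hypotheses under which $A(0,T)$ was shown to be a subspace.
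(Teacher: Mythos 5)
Your proof is correct. Note that the paper itself does not prove Theorem~\ref{condKalman} --- it explicitly defers to the literature (Lee--Markus, Macki--Strauss, Tr\'elat) --- and your argument (variation of constants, identification of the reachable set $A(0,T)$ with $\mathrm{Im}(C)$ by computing its annihilator via the choice $u(s)=p^{\top}e^{(T-s)A}B$ and Cayley--Hamilton) is precisely the standard proof found in those references, including your correct observation that the equivalence requires unconstrained controls $I=\R$, which is consistent with the paper's later remark that the mass--spring system is controllable ``se $u(t)\in\R$''.
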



\subsection{Princ\'{\i}pio do M\'{a}ximo de Pontryagin
para o problema de tempo m\'{\i}nimo}

Come\c{c}amos por formalizar, com a ajuda do conjunto
acess\'{\i}vel $A(X_0, t)$, a no\c{c}\~{a}o de tempo m\'{\i}nimo.

Sejam $X_0, X_1 \in \R^n$. Suponhamos que $X_1$ \'{e} acess\'{\i}vel
a partir de $X_0$, \textrm{i.e.}, suponhamos que existe pelo menos
uma traject\'{o}ria unindo $X_0$ a $X_1$. De entre todas as traject\'{o}rias
que unem $X_0$ a $X_1$ gostar\'{\i}amos de caracterizar aquela
que o faz em tempo m\'{\i}nimo $T$ (Figura~\ref{tempominimo}).

\begin{figure}[!htb]
\begin{center}
\unitlength=.25mm
\makeatletter
\def\shade{\@ifnextchar[{\shade@special}{\@killglue\special{sh}\ignorespaces}}
\def\shade@special[#1]{\@killglue\special{sh #1}\ignorespaces}
\makeatother
\begin{picture}(321,97)(176,-5)
\thinlines
\typeout{\space\space\space eepic-ture exported by 'qfig'.}
\font\FonttenBI=cmbxti10\relax
\font\FonttwlBI=cmbxti10 scaled \magstep1\relax
\path (201,49)(206,44)
\path (200,44)(206,49)
\path (398,49)(404,44)
\path (397,44)(404,50)
\path (203,47)(204.965,46.76)(206.96,46.64)(208.985,46.64)(211.04,46.76)
(213.125,47)(215.24,47.36)(217.385,47.84)(219.56,48.44)(221.765,49.16)
(224,50)(226.355,51.59)(228.72,53.16)(231.095,54.71)(233.48,56.24)
(235.875,57.75)(238.28,59.24)(240.695,60.71)(243.12,62.16)(245.555,63.59)
(248,65)(250.545,66.66)(253.08,68.24)(255.605,69.74)(258.12,71.16)
(260.625,72.5)(263.12,73.76)(265.605,74.94)(268.08,76.04)(270.545,77.06)
(273,78)(275.31,78.815)(277.64,79.56)(279.99,80.235)(282.36,80.84)
(284.75,81.375)(287.16,81.84)(289.59,82.235)(292.04,82.56)(294.51,82.815)
(297,83)(299.285,83.115)(301.64,83.16)(304.065,83.135)(306.56,83.04)
(309.125,82.875)(311.76,82.64)(314.465,82.335)(317.24,81.96)(320.085,81.515)
(323,81)(326.84,80.145)(330.56,79.28)(334.16,78.405)(337.64,77.52)
(341,76.625)(344.24,75.72)(347.36,74.805)(350.36,73.88)(353.24,72.945)
(356,72)(357.965,71.18)(359.96,70.32)(361.985,69.42)(364.04,68.48)
(366.125,67.5)(368.24,66.48)(370.385,65.42)(372.56,64.32)(374.765,63.18)
(377,62)(379.265,60.78)(381.56,59.52)(383.885,58.22)(386.24,56.88)
(388.625,55.5)(391.04,54.08)(393.485,52.62)(395.96,51.12)(398.465,49.58)
(401,48)
\put(409,40){{\xiipt\rm {$X_1=X(T)$}}}
\put(176,37){{\xiipt\rm {$X_0$}}}
\path (203,47)(204.4,46.49)(206,45.96)(207.8,45.41)(209.8,44.84)
(212,44.25)(214.4,43.64)(217,43.01)(219.8,42.36)(222.8,41.69)
(226,41)(231.11,39.705)(236.04,38.52)(240.79,37.445)(245.36,36.48)
(249.75,35.625)(253.96,34.88)(257.99,34.245)(261.84,33.72)(265.51,33.305)
(269,33)(271.77,33.255)(274.48,33.52)(277.13,33.795)(279.72,34.08)
(282.25,34.375)(284.72,34.68)(287.13,34.995)(289.48,35.32)(291.77,35.655)
(294,36)(295.675,36.085)(297.4,36.24)(299.175,36.465)(301,36.76)
(302.875,37.125)(304.8,37.56)(306.775,38.065)(308.8,38.64)(310.875,39.285)
(313,40)(315.04,41.595)(317.16,43.08)(319.36,44.455)(321.64,45.72)
(324,46.875)(326.44,47.92)(328.96,48.855)(331.56,49.68)(334.24,50.395)
(337,51)(340.875,51.225)(344.6,51.4)(348.175,51.525)(351.6,51.6)
(354.875,51.625)(358,51.6)(360.975,51.525)(363.8,51.4)(366.475,51.225)
(369,51)(370.79,50.23)(372.56,49.52)(374.31,48.87)(376.04,48.28)
(377.75,47.75)(379.44,47.28)(381.11,46.87)(382.76,46.52)(384.39,46.23)
(386,46)(387.59,45.83)(389.16,45.72)(390.71,45.67)(392.24,45.68)
(393.75,45.75)(395.24,45.88)(396.71,46.07)(398.16,46.32)(399.59,46.63)
(401,47)
\path (203,47)(203.825,45.1)(204.8,43.2)(205.925,41.3)(207.2,39.4)
(208.625,37.5)(210.2,35.6)(211.925,33.7)(213.8,31.8)(215.825,29.9)
(218,28)(220.595,25.47)(223.28,23.08)(226.055,20.83)(228.92,18.72)
(231.875,16.75)(234.92,14.92)(238.055,13.23)(241.28,11.68)(244.595,10.27)
(248,9)(251.585,7.915)(255.24,6.96)(258.965,6.135)(262.76,5.44)
(266.625,4.875)(270.56,4.44)(274.565,4.135)(278.64,3.96)(282.785,3.915)
(287,4)(291.96,4.665)(296.84,5.36)(301.64,6.085)(306.36,6.84)
(311,7.625)(315.56,8.44)(320.04,9.285)(324.44,10.16)(328.76,11.065)
(333,12)(337.16,12.515)(341.24,13.16)(345.24,13.935)(349.16,14.84)
(353,15.875)(356.76,17.04)(360.44,18.335)(364.04,19.76)(367.56,21.315)
(371,23)(374.36,24.815)(377.64,26.76)(380.84,28.835)(383.96,31.04)
(387,33.375)(389.96,35.84)(392.84,38.435)(395.64,41.16)(398.36,44.015)
(401,47)
\path (298,87)(306,83)(298,80)
\path (289,9)(296,5)(289,0)
\path (279,39)(287,35)(279,32)
\end{picture}
\end{center}
\caption{qual a traject\'{o}ria $X$ para a qual $T$ \'{e} m\'{\i}nimo?}
\label{tempominimo}
\end{figure}

Se $T$ for o tempo m\'{\i}nimo, ent\~{a}o para todo o $t < T$, $X_1 \not \in A(X_0, T)$
(com efeito, se assim n\~{a}o fosse $X_1$ seria acess\'{\i}vel
a partir de $X_0$ num tempo inferior a $T$
e $T$ n\~{a}o seria o tempo m\'{\i}nimo). Consequentemente,
\begin{equation}
\label{eq:inf:T}
T = \inf \{ t> 0 \, | \, X_1 \in A(X_0, t) \} \, .
\end{equation}
O valor de $T$ est\'{a} bem definido pois, a partir do Teorema~\ref{thm:1},
$A(X_0, t)$ varia continuamente com $t$, logo $\{ t > 0 \, | \, X_1 \in A(X_0, t) \}$
\'{e} fechado em $\R$. Em particular o \'{\i}nfimo em \eqref{eq:inf:T} \'{e} m\'{\i}nimo.
O tempo $t=T$ \'{e} o primeiro instante para o qual $A(X_0, t)$
cont\'{e}m $X_1$ (Figura~\ref{tempominimo2}).

\begin{figure}[!htb]
\begin{center}
\unitlength=.25mm
\makeatletter
\def\shade{\@ifnextchar[{\shade@special}{\@killglue\special{sh}\ignorespaces}}
\def\shade@special[#1]{\@killglue\special{sh #1}\ignorespaces}
\makeatother
\begin{picture}(290,155)(167,-5)
\thinlines
\typeout{\space\space\space eepic-ture exported by 'qfig'.}
\font\FonttenBI=cmbxti10\relax
\font\FonttwlBI=cmbxti10 scaled \magstep1\relax
\path (185,84)(191,77)
\path (185,77)(191,84)
\put(181,62){{\xiipt\rm {$X_0$}}}
\path (167,81)(167.985,83.61)(169.04,86.04)(170.165,88.29)(171.36,90.36)
(172.625,92.25)(173.96,93.96)(175.365,95.49)(176.84,96.84)(178.385,98.01)
(180,99)(182,99.81)(184,100.44)(186,100.89)(188,101.16)
(190,101.25)(192,101.16)(194,100.89)(196,100.44)(198,99.81)
(200,99)(203.17,97.515)(206.08,95.96)(208.73,94.335)(211.12,92.64)
(213.25,90.875)(215.12,89.04)(216.73,87.135)(218.08,85.16)(219.17,83.115)
(220,81)(220.525,77.69)(220.8,74.56)(220.825,71.61)(220.6,68.84)
(220.125,66.25)(219.4,63.84)(218.425,61.61)(217.2,59.56)(215.725,57.69)
(214,56)(210.225,53.545)(206.6,51.48)(203.125,49.805)(199.8,48.52)
(196.625,47.625)(193.6,47.12)(190.725,47.005)(188,47.28)(185.425,47.945)
(183,49)(180.725,50.445)(178.6,52.28)(176.625,54.505)(174.8,57.12)
(173.125,60.125)(171.6,63.52)(170.225,67.305)(169,71.48)(167.925,76.045)
(167,81)
\path (204,88)(204.97,90.42)(206.08,92.68)(207.33,94.78)(208.72,96.72)
(210.25,98.5)(211.92,100.12)(213.73,101.58)(215.68,102.88)(217.77,104.02)
(220,105)(223.54,106.36)(226.96,107.44)(230.26,108.24)(233.44,108.76)
(236.5,109)(239.44,108.96)(242.26,108.64)(244.96,108.04)(247.54,107.16)
(250,106)(253.195,103.435)(256.08,100.84)(258.655,98.215)(260.92,95.56)
(262.875,92.875)(264.52,90.16)(265.855,87.415)(266.88,84.64)(267.595,81.835)
(268,79)(268.005,74.29)(267.72,69.96)(267.145,66.01)(266.28,62.44)
(265.125,59.25)(263.68,56.44)(261.945,54.01)(259.92,51.96)(257.605,50.29)
(255,49)(249.315,48.765)(243.96,48.76)(238.935,48.985)(234.24,49.44)
(229.875,50.125)(225.84,51.04)(222.135,52.185)(218.76,53.56)(215.715,55.165)
(213,57)(210.615,59.065)(208.56,61.36)(206.835,63.885)(205.44,66.64)
(204.375,69.625)(203.64,72.84)(203.235,76.285)(203.16,79.96)(203.415,83.865)
(204,88)
\path (234,91)(235.545,94.89)(237.28,98.56)(239.205,102.01)(241.32,105.24)
(243.625,108.25)(246.12,111.04)(248.805,113.61)(251.68,115.96)(254.745,118.09)
(258,120)(262.435,121.645)(266.84,123.08)(271.215,124.305)(275.56,125.32)
(279.875,126.125)(284.16,126.72)(288.415,127.105)(292.64,127.28)(296.835,127.245)
(301,127)(305.81,126.86)(310.44,126.44)(314.89,125.74)(319.16,124.76)
(323.25,123.5)(327.16,121.96)(330.89,120.14)(334.44,118.04)(337.81,115.66)
(341,113)(345.315,108.26)(349.16,103.64)(352.535,99.14)(355.44,94.76)
(357.875,90.5)(359.84,86.36)(361.335,82.34)(362.36,78.44)(362.915,74.66)
(363,71)(361.22,67.19)(359.28,63.56)(357.18,60.11)(354.92,56.84)
(352.5,53.75)(349.92,50.84)(347.18,48.11)(344.28,45.56)(341.22,43.19)
(338,41)(334.35,38.18)(330.6,35.72)(326.75,33.62)(322.8,31.88)
(318.75,30.5)(314.6,29.48)(310.35,28.82)(306,28.52)(301.55,28.58)
(297,29)(290.145,30.77)(283.68,32.68)(277.605,34.73)(271.92,36.92)
(266.625,39.25)(261.72,41.72)(257.205,44.33)(253.08,47.08)(249.345,49.97)
(246,53)(243.045,56.17)(240.48,59.48)(238.305,62.93)(236.52,66.52)
(235.125,70.25)(234.12,74.12)(233.505,78.13)(233.28,82.28)(233.445,86.57)
(234,91)
\path (316,103)(318.395,108.6)(321.08,113.8)(324.055,118.6)(327.32,123)
(330.875,127)(334.72,130.6)(338.855,133.8)(343.28,136.6)(347.995,139)
(353,141)(360.86,142.465)(368.44,143.56)(375.74,144.285)(382.76,144.64)
(389.5,144.625)(395.96,144.24)(402.14,143.485)(408.04,142.36)(413.66,140.865)
(419,139)(425.05,135.595)(430.6,132.08)(435.65,128.455)(440.2,124.72)
(444.25,120.875)(447.8,116.92)(450.85,112.855)(453.4,108.68)(455.45,104.395)
(457,100)(456.61,93.965)(456.04,88.16)(455.29,82.585)(454.36,77.24)
(453.25,72.125)(451.96,67.24)(450.49,62.585)(448.84,58.16)(447.01,53.965)
(445,50)(443.305,46.22)(441.32,42.68)(439.045,39.38)(436.48,36.32)
(433.625,33.5)(430.48,30.92)(427.045,28.58)(423.32,26.48)(419.305,24.62)
(415,23)(408.065,21.17)(401.36,19.68)(394.885,18.53)(388.64,17.72)
(382.625,17.25)(376.84,17.12)(371.285,17.33)(365.96,17.88)(360.865,18.77)
(356,20)(350.96,22.155)(346.24,24.52)(341.84,27.095)(337.76,29.88)
(334,32.875)(330.56,36.08)(327.44,39.495)(324.64,43.12)(322.16,46.955)
(320,51)(318.16,55.255)(316.64,59.72)(315.44,64.395)(314.56,69.28)
(314,74.375)(313.76,79.68)(313.84,85.195)(314.24,90.92)(314.96,96.855)
(316,103)
\path (359,78)(366,72)
\path (358,72)(367,78)
\put(372,67){{\xiipt\rm {$X_1$}}}
\put(270,45){{\xiipt\rm {$A(X_0,T)$}}}
\path (229,18)(311,18)
\path (303.482,20.736)(311,18)(303.482,15.264)
\put(247,-1){{\xiipt\rm {$A(X_0,t)$}}}
\end{picture}
\end{center}
\caption{o tempo m\'{\i}nimo $T$ corresponde ao primeiro instante $t$
para o qual $A(X_0, t) \cap \{X_1\} \ne \emptyset$.}
\label{tempominimo2}
\end{figure}

Por outro lado, temos necessariamente:
\begin{equation*}
X_1 \in Fr \, A(X_0, T) \backslash int \, A(X_0, T) \, .
\end{equation*}
Com efeito, se $X_1$ pertencesse ao interior de $A(X_0, T)$,
ent\~{a}o para $t < T$ pr\'{o}ximo de $T$, $X_1$ pertenceria ainda a
$A(X_0, t)$ pois $A(X_0, t)$ varia continuamente com $t$.
Isto contradiz o facto de $T$ ser o tempo m\'{\i}nimo.
Estas observa\c{c}\~{o}es d\~{a}o uma vis\~{a}o geom\'{e}trica
\`{a} no\c{c}\~{a}o de tempo m\'{\i}nimo e conduzem-nos
\`{a} seguinte defini\c{c}\~{a}o:

\begin{definition}
Seja $u \in L^1([0, T], I)$. O controlo $u$ diz-se \'{o}ptimo
para o sistema \eqref{contsystlin} se a correspondente
traject\'{o}ria $X$ verifica $X(T) \in Fr \, A(X_0, T)$.
\end{definition}

Dizer que $u$ \'{e} \'{o}ptimo \'{e} dizer que a traject\'{o}ria associada
a $u$ une $X_0$ a $X_1$ em tempo m\'{\i}nimo. O objectivo
\'{e} ent\~{a}o o de determinar os controlos \'{o}ptimos. O teorema que se segue
d\'{a}-nos uma condi\c{c}\~{a}o necess\'{a}ria e suficiente de optimalidade.

\begin{theorem}[Princ\'{\i}pio do M\'{a}ximo de Pontryagin (caso linear)]
\label{principiomaxLinear}
Considere-se o sistema de controlo
\begin{equation*}
\begin{cases}
\dot{X} = AX + Bu \, ,\\
X(0) = X_0 \, .
\end{cases}
\end{equation*}
Seja $T > 0$. O controlo $u \in L^1([0, T],I = [-1, 1])$
\'{e} \'{o}ptimo se e somente se
$$u(t) = sinal \langle \eta(t), B \rangle$$
onde $\langle \cdot, \cdot \rangle$ \'{e} o produto
interno em $\mathbb{R}^n$ e $\eta(t) \in \R^n$
\'{e} solu\c{c}\~{a}o da equa\c{c}\~{a}o $\dot{\eta}^T = - \eta^T A$.
\end{theorem}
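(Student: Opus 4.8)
The plan is to prove the Pontryagin Maximum Principle for the minimal-time linear problem by exploiting the geometry of the accessible set $A(X_0,T)$ and the linearity of the solution map $u \mapsto X(T)$. The crucial characterization of optimality established earlier is that $u$ is optimal if and only if $X(T) \in \mathrm{Fr}\, A(X_0,T)$, so the whole argument reduces to describing when a control steers the trajectory to the \emph{boundary} of the accessible set. Since $A(X_0,T)$ is compact and convex (by Teorema~\ref{thm:1}), a point lies on its boundary precisely when there is a supporting hyperplane there, i.e.\ a nonzero vector $\eta_T \in \R^n$ such that $\langle \eta_T, X(T) \rangle \geq \langle \eta_T, Y \rangle$ for every $Y \in A(X_0,T)$. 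This is the bridge between the geometric optimality condition and the analytic maximization condition, and I expect it to be the conceptual heart of the proof.

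First I would write the explicit variation-of-constants formula $X(T) = e^{TA}X_0 + e^{TA}\int_0^T e^{-sA}Bu(s)\,ds$, noting that only the integral term depends on $u$, and linearly so. The supporting-hyperplane inequality for the optimal control $u$ then reads
\begin{equation*}
\int_0^T \langle \eta_T, e^{(T-s)A}B \rangle\, u(s)\, ds
\geq \int_0^T \langle \eta_T, e^{(T-s)A}B \rangle\, v(s)\, ds
\end{equation*}
for every admissible $v$ with values in $I=[-1,1]$. Next I would define the adjoint variable by $\eta(s)^T = \eta_T^T e^{(T-s)A}$, which is exactly the solution of $\dot{\eta}^T = -\eta^T A$ with terminal value $\eta(T)=\eta_T$, so that the integrand becomes $\langle \eta(s), B\rangle\, u(s)$. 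The inequality above says $u$ \emph{maximizes} the linear functional $\int_0^T \langle \eta(s),B\rangle\, v(s)\,ds$ over all admissible $v$, and since the maximization is pointwise and the constraint is $v(s)\in[-1,1]$, the maximizer must satisfy $u(s) = \mathrm{sinal}\,\langle \eta(s), B\rangle$ at almost every $s$ where $\langle \eta(s),B\rangle \neq 0$, which gives the stated formula.

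For the \textbf{converse}, I would run the same inequality backwards: if $u(t) = \mathrm{sinal}\,\langle \eta(t), B\rangle$ for some nontrivial solution $\eta$ of the adjoint equation, then $u$ maximizes the linear functional, hence the corresponding $X(T)$ maximizes $\langle \eta_T, \cdot \rangle$ over the convex set $A(X_0,T)$, which forces $X(T)$ onto the supporting hyperplane and therefore onto the boundary; by the optimality characterization $u$ is optimal. The \textbf{main obstacle} I anticipate is the degenerate case where $\langle \eta(s),B\rangle$ vanishes on a set of positive measure, which would leave the sign undetermined there and could threaten uniqueness of the optimal control; handling this cleanly requires invoking the Kalman controllability condition (Teorema~\ref{condKalman}) to argue that $\langle \eta(s),B\rangle = \eta_T^T e^{(T-s)A}B$, being real-analytic in $s$, cannot vanish on a positive-measure set unless $\eta_T^T e^{(T-s)A}B \equiv 0$, which contradicts $\eta_T \neq 0$ together with full rank of $C=(B\,|\,AB\,|\,\cdots\,|\,A^{n-1}B)$. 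Establishing that the supporting functional $\eta_T$ can always be chosen nonzero, and that the singular set is negligible, is where I would spend the most care.
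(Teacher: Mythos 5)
The paper never actually proves Teorema~\ref{principiomaxLinear}: just before the theorems of Sec\c{c}\~{a}o~\ref{sec:cont:linear} it remarks that the proofs are well known and defers to the literature \cite{Lee_Markus,MR0638591,Trelat_book1}, so there is no in-paper argument to compare against. Judged on its own, your proof is correct, and it is precisely the canonical argument from those references --- indeed the one the paper's framework is built for: Teorema~\ref{thm:1} supplies compactness and convexity of $A(X_0,T)$, the paper's definition of optimality ($X(T) \in Fr\,A(X_0,T)$) is your starting point, the supporting hyperplane at $X(T)$ yields the functional $\eta_T$, the identification $\eta(s)^T = \eta_T^T e^{(T-s)A}$ converts it into a solution of the adjoint equation $\dot{\eta}^T = -\eta^T A$, and pointwise maximization of $v \mapsto \int_0^T \langle \eta(s),B\rangle\, v(s)\,ds$ over $v(s) \in [-1,1]$ gives the sign law; the converse, run backwards so that $X(T)$ maximizes a nonzero linear functional over the convex set and hence lies on its boundary, is equally sound (here it helps to note explicitly that a nontrivial solution of the adjoint linear ODE never vanishes, so $\eta_T = \eta(T) \neq 0$). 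One caveat: your treatment of the degenerate set where $\langle \eta(s),B\rangle = 0$ --- real-analyticity of $s \mapsto \eta_T^T e^{(T-s)A}B$ combined with full rank of $C = (B\,|\,AB\,|\,\cdots\,|\,A^{n-1}B)$ --- is the right repair, but the Kalman condition of Teorema~\ref{condKalman} is not among the stated hypotheses of the theorem; it is an implicit assumption without which the statement itself degenerates (if $rank(C) < n$ the accessible set has empty interior, every admissible control is ``\'{o}ptimo'' in the boundary sense, and $sinal\langle \eta(t),B\rangle$ can be identically undefined). So you have patched a looseness in the statement rather than invoked an unavailable hypothesis, which is worth saying explicitly if this proof were to be written out in full.
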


A condi\c{c}\~{a}o inicial $\eta(0)$ depende de $X_1$.
Como ela n\~{a}o \'{e} directamente conhecida, a utiliza\c{c}\~{a}o do
Teorema~\ref{principiomaxLinear} \'{e} maioritariamente indirecta.
Vejamos um exemplo.


\subsection{Exemplo: controlo \'{o}ptimo
de um oscilador harm\'{o}nico (caso linear)}

Consideremos uma massa pontual $m$
ligada a uma mola cujo movimento est\'{a} restrito
a um eixo $Ox$ (Figura~\ref{mola}).

\begin{figure}[!htb]
\begin{center}
\unitlength=.25mm
\makeatletter
\def\shade{\@ifnextchar[{\shade@special}{\@killglue\special{sh}\ignorespaces}}
\def\shade@special[#1]{\@killglue\special{sh #1}\ignorespaces}
\makeatother
\begin{picture}(239,109)(191,-5)
\thinlines
\typeout{\space\space\space eepic-ture exported by 'qfig'.}
\font\FonttenBI=cmbxti10\relax
\font\FonttwlBI=cmbxti10 scaled \magstep1\relax
\path (200,0)(200,80)
\path (200,75)(192,70)
\path (200,66)(192,61)
\path (199,47)(191,42)
\path (199,56)(191,51)
\path (200,38)(192,33)
\path (200,29)(192,24)
\path (200,21)(192,16)
\path (200,13)(192,8)
\path (200,40)(400,40)
\path (392.482,42.736)(400,40)(392.482,37.264)
\put(390,19){{\xiipt\rm {$x$}}}
\path (200,40)(212,51)(224,28)(236,52)(248,28)
(260,52)(272,28)(284,52)(296,28)(305,40)
(305,40)
\shade[0.7]\path (306,46)(306,33)
(327,33)(327,46)(306,46)
\put(311,15){{\xiipt\rm {$m$}}}
\path (275,79)(321,79)
\path (313.482,81.736)(321,79)(313.482,76.264)
\put(283,87){{\xiipt\rm {$\vec{\iota}$}}}
\put(203,24){{\xiipt\rm {$O$}}}
\end{picture}
\end{center}
\caption{sistema massa-mola.}
\label{mola}
\end{figure}
A massa pontual sai da origem por uma for\c{c}a que supomos igual a
$$-k_1(x-l) - k_2(x-l)^3$$ onde $l$ \'{e} o comprimento da mola em repouso.
Aplicamos a essa massa pontual uma for\c{c}a exterior horizontal $u(t) \vec{l}$.
A segunda Lei de Newton diz-nos que
a for\c{c}a resultante aplicada \'{e} directamente proporcional ao produto
entre a massa inercial e a acelera\c{c}\~{a}o adquirida pela mesma, ou seja
\begin{equation}
\label{eqmov}
m\ddot{x}(t) + k_1(x(t)-l) + k_2 (x(t) - l)^3 = u(t) \, .
\end{equation}
As leis b\'{a}sicas da F\'{\i}sica dizem-nos tamb\'{e}m que
todas as for\c{c}as s\~{a}o limitadas. Impomos a seguinte
\emph{restri\c{c}\~{a}o} \`{a} for\c{c}a exterior:
\begin{equation*}
|u(t)| \leq 1 \quad \forall \, t \, .
\end{equation*}
Isto significa que a for\c{c}a apenas pode tomar valores
no intervalo \emph{fechado} $[-1,1]$.
Suponhamos que a posi\c{c}\~{a}o e a velocidade iniciais do objecto s\~{a}o,
respectivamente, $x(0) = x_0$ e $\dot{x}(0) = y_0$.
O problema consiste em trazer, em \emph{tempo m\'{\i}nimo},
a massa pontual \`{a} posi\c{c}\~{a}o de equil\'{\i}brio $x = l$ por escolha
adequada da for\c{c}a externa $u(t)$ e tendo em conta a \emph{restri\c{c}\~{a}o}
$|u(t)| \leq 1$. A for\c{c}a $u$ \'{e} aqui o nosso \emph{controlo}.

\begin{problema}
Dadas as condi\c{c}\~{o}es iniciais $x(0) = x_0$ e $\dot{x}(0) = y_0$,
encontrar a fun\c{c}\~{a}o $u$ que permite transportar a massa
para a sua posi\c{c}\~{a}o de equil\'{\i}brio em tempo m\'{\i}nimo.
\end{problema}


\subsubsection{Modela\c{c}\~{a}o matem\'{a}tica}

Para simplificar a apresenta\c{c}\~{a}o, vamos supor
$m = 1 \, kg$, $k_1 = 1 \, N.m^{-1}$ e $l = 0 \, m$
(passamos a $l = 0$ por transla\c{c}\~{a}o).
A equa\c{c}\~{a}o de movimento \eqref{eqmov} \'{e} ent\~{a}o equivalente
ao sistema diferencial de controlo
\begin{equation}
\label{SC}
\begin{gathered}
\begin{cases}
\dot{x}(t) = y(t)\\
\dot{y}(t) = - x(t) - k_2 x(t)^3 + u(t)
\end{cases}\\
x(0) = x_0, \, \, \, \dot{x}(0) = y_0 \, .
\end{gathered}
\end{equation}
Escrevemos facilmente \eqref{SC} na nota\c{c}\~{a}o matricial
\begin{equation}
\label{SCFD}
\dot{X} = A X + f(X) + Bu \, , \hspace{0.3 cm} X(0) = X_0 \, ,
\end{equation}
tomando
\begin{equation*}
\begin{split}
A &= \left( \begin{array}{cc} 0 & 1\\
-1 & 0
\end{array} \right)\, , \, \,
B = \left( \begin{array}{c} 0\\
1 \end{array} \right)\, , \\
X &= \left( \begin{array}{c} x\\
y \end{array} \right)\, , \, \,
X_0 = \left( \begin{array}{c} x_0\\
y_0 \end{array} \right)\, , \, \,
f(X) = \left( \begin{array}{c} 0\\
-k_2 x^3 \end{array} \right) \, .
\end{split}
\end{equation*}
Tendo em mente que estamos na sec\c{c}\~{a}o de controlo linear
fixamos $k_2 = 0$, desprezando efeitos conservativos
n\~{a}o lineares (na Sec\c{c}\~{a}o~\ref{sec:CO:NL},
onde abordamos o controlo \'{o}ptimo n\~{a}o linear,
consideraremos o caso $k_2 \ne 0$).
Para $k_2 =0$ temos $f(X) \equiv 0$ e obtemos
o sistema de controlo \eqref{SCFD} na forma \eqref{eq:SL0}
(sistema de controlo linear).
Pretendemos responder a duas quest\~{o}es:
\begin{itemize}
\item[1.] Existir\'{a} sempre, para toda e qualquer condi\c{c}\~{a}o inicial
$x(0) = x_0$ e $\dot{x}(0) = y_0$,  uma for\c{c}a exterior horizontal
(um controlo) que permite transportar em tempo finito $T$ a massa
pontual para sua posi\c{c}\~{a}o de equil\'{\i}brio $x(T) = 0$ e $\dot{x}(T) = 0$?

\item[2.] Se a primeira pergunta for respondida positivamente,
qual a for\c{c}a (qual o controlo) que minimiza o tempo
de transporte da massa pontual \`{a} sua posi\c{c}\~{a}o de equil\'{\i}brio?
\end{itemize}


\subsubsection{Controlabilidade do sistema}

O nosso sistema escreve-se na forma
\begin{equation*}
\begin{cases}
\dot{X} = AX + B u\\
X(0) = X_0
\end{cases}
\end{equation*}
com $A= \left(\begin{array}{cc} 0 & 1\\ -1 & 0 \end{array}\right)$
e $B= \left( \begin{array}{c} 0\\ 1 \end{array}\right)$.
Temos ent\~{a}o
$$rank\left( B | AB \right) = rank\left(\begin{array}{cc} 0 & 1\\
1 & 0 \end{array} \right) = 2$$ e o Teorema~\ref{condKalman}
garante-nos que o sistema \'{e} control\'{a}vel (se $u(t) \in \R$).
Isto significa que existem controlos para os quais as traject\'{o}rias
associadas unem $X_0$ a $0$. Temos assim resposta afirmativa
\`{a} nossa primeira quest\~{a}o, admitindo que o sistema mant\'{e}m-se control\'{a}vel
com controlos que verificam a restri\c{c}\~{a}o $|u| \leq 1$
(o que ser\'{a} verificado \emph{a posteriori}).
Esta resposta \'{e} esperada em termos f\'{\i}sicos.
Se n\~{a}o aplicarmos uma for\c{c}a exterior, \textrm{i.e.},
se $u = 0$, a equa\c{c}\~{a}o do movimento \'{e} $\ddot{x} + x = 0$
e a massa pontual oscila sem nunca parar, nunca voltando
\`{a} sua posi\c{c}\~{a}o de equil\'{\i}brio em tempo finito. Por outro lado,
ao aplicarmos determinadas for\c{c}as exteriores, temos tend\^{e}ncia
a amortecer as oscila\c{c}\~{o}es. A teoria do controlo prev\^{e}
que conseguimos realmente parar a massa em tempo finito.


\subsubsection{Determina\c{c}\~{a}o do controlo \'{o}ptimo}

Sabemos que existem controlos que permitem conduzir o sistema de $X_0$ a $0$.
Agora queremos determinar, em concreto,
qual desses controlos o faz em tempo m\'{\i}nimo.
Para isso aplicamos o Teorema~\ref{principiomaxLinear}:
$$u(t) = sinal \, \langle \eta(t), B \rangle \, ,$$
onde $\eta(t) \in \R^2$ \'{e} solu\c{c}\~{a}o de $\dot{\eta}^T = - \eta^T A$.
Seja $\eta(t) = \left(\begin{array}{c} \eta_1(t)\\
\eta_2(t) \end{array} \right)$. Ent\~{a}o, $u(t) = sinal \,
\eta_2(t)$ e $\dot{\eta}_1 = \eta_2$, $\dot{\eta}_2 = - \eta_1$,
ou seja, $\ddot{\eta}_2 + \eta_2 = 0$.
Logo $\eta_2 (t) = \lambda \cos t + \mu \sin t $.
Consequentemente, o controlo \'{o}ptimo \'{e} seccionalmente constante
em intervalos de comprimento $\pi$ e toma valores alternadamente $\pm 1$.
\begin{itemize}
\item[$\bullet$] Se $u = -1$, obtemos o sistema diferencial
\begin{equation}
\label{sistdif-1}
\begin{cases}
\dot{x} = y \, ,\\
\dot{y} = -x -1 \, .
\end{cases}
\end{equation}

\item[$\bullet$] Se $u = + 1$, obtemos
\begin{equation}
\label{sistdif+1}
\begin{cases}
\dot{x} = y \, ,\\
\dot{y} = -x + 1 \, .
\end{cases}
\end{equation}
\end{itemize}
A traject\'{o}ria \'{o}ptima unindo $X_0$ a $0$ \'{e} constitu\'{\i}da por peda\c{c}os
de solu\c{c}\~{o}es de \eqref{sistdif-1} e \eqref{sistdif+1} concatenadas.
As solu\c{c}\~{o}es de \eqref{sistdif-1} e \eqref{sistdif+1} s\~{a}o
facilmente obtidas:
\begin{equation*}
\begin{split}
\dot{x} = y, \, \dot{y} = -x -1 \, &\Rightarrow \frac{d}{dx}((x+1)^2 + y^2) = 0\\
&\Rightarrow (x+1)^2 + y^2 = const = R^2
\end{split}
\end{equation*}
e conclu\'{\i}mos que as curvas solu\c{c}\~{o}es de \eqref{sistdif-1}
s\~{a}o c\'{\i}rculos centrados em $x=-1$ e $y=0$ de per\'{\i}odo $2\pi$
(com efeito, $x(t) = -1 + R \cos t$ e $y(t) = R \sin t$);
como solu\c{c}\~{o}es de \eqref{sistdif+1} obtemos
$x(t) = 1 + R \cos t$ e $y(t) = R \sin t$, \textrm{i.e.},
as solu\c{c}\~{o}es de \eqref{sistdif+1} s\~{a}o c\'{\i}rculos centrados
em $x=1$ e $y=0$ de per\'{\i}odo $2\pi$.

A traject\'{o}ria \'{o}ptima de $X_0$ at\'{e} $0$ segue alternadamente
um arco de c\'{\i}rculo centrado em $x=-1$ e $y=0$
e um arco de c\'{\i}rculo centrado em $x=1$ e $y=0$.
O estudo detalhado da traject\'{o}ria \'{o}ptima
e a sua implementa\c{c}\~{a}o num\'{e}rica, para todo e qualquer
$X_0$, podem ser encontrados em \cite{Trelat_book1}.


\section{Controlo \'{o}ptimo n\~{a}o linear}
\label{sec:CO:NL}

Apresentamos agora algumas t\'{e}cnicas para a an\'{a}lise
de problemas de controlo \'{o}ptimo n\~{a}o lineares.
Em particular, formulamos o Princ\'{\i}pio do M\'{a}ximo de Pontryagin
numa forma mais geral do que aquela que vimos
na Sec\c{c}\~{a}o~\ref{sec:cont:linear}.
O exemplo n\~{a}o linear da massa-mola
ser\'{a} tratado como exemplo de aplica\c{c}\~{a}o.


\subsection{Problem\'{a}tica geral}

De um ponto de vista global, o problema deve se formulado
numa variedade $M$, mas o nosso ponto de vista vai ser \emph{local}
e trabalhamos sobre um aberto $V$ de $\R^n$ suficientemente pequeno.
A problem\'{a}tica geral do controlo \'{o}ptimo \'{e} a seguinte.
Consideremos um sistema de controlo
\begin{equation}
\label{contsystnonlin}
\dot{x}(t) = f(x(t), u(t))
\end{equation}
sobre $V$ onde $f: \R^n \times \R^m \to \R^n$
\'{e} ``suave''\footnote{F.H.~Clarke criou nos anos setenta
a chamada ``An\'{a}lise N\~{a}o-Suave'' (\emph{Nonsmooth Analysis})
que permite o estudo de problemas de controlo \'{o}ptimo mais gerais,
em que as fun\c{c}\~{o}es envolvidas n\~{a}o s\~{a}o necessariamente diferenci\'{a}veis
no sentido cl\'{a}ssico. Dado o car\'{a}cter introdut\'{o}rio do nosso texto,
restringimo-nos ao caso ``suave'' no sentido $C^\infty$: todos
os objectos manipulados s\~{a}o aqui, salvo casos particulares mencionados,
$C^{\infty}$. Remetemos o leitor interessado na An\'{a}lise N\~{a}o-Suave
para \cite{Clar83,17,102,24}.} e o conjunto dos controlos admiss\'{\i}veis
$\mathcal{U}$ \'{e} composto por aplica\c{c}\~{o}es
$u : [0, T(u)] \to \Omega \subseteq \R^m$ mensur\'{a}veis limitadas.
Dada uma aplica\c{c}\~{a}o $f^0: \R^n \times \R^m \to \R$,
denotamos por $$C(u) = \int_0^{T'} f^0(x(t), u(t)) dt$$
o custo de uma traject\'{o}ria $x : t \mapsto x(t)$ associada a $u(\cdot)$
e definido sobre $[0, T'(u)]$, $T'(u) \leq T(u)$. Sejam $M_0$ e $M_1$
duas sub-variedades regulares de $V$. O problema do controlo \'{o}ptimo
consiste em encontrar, de entre todas as traject\'{o}rias
que unem $M_0$ a $M_1$, aquelas cujo custo \'{e} m\'{\i}nimo.
Come\c{c}amos por restringir-nos ao caso em que $M_0$ e $M_1$
s\~{a}o pontos $x_0$ e $x_1$ de $V$. Sendo o nosso ponto de vista local,
podemos sempre supor que $x_0 = 0$.


\subsection{Aplica\c{c}\~{a}o entrada-sa\'{\i}da}

Consideremos para o sistema \eqref{contsystnonlin} o seguinte problema
de \emph{controlo}: dado um ponto $x_1 \in V$,
encontrar um tempo $T$ e um controlo $u$ sobre $[0, T]$
tal que a traject\'{o}ria $x_u$ associada a $u$,
solu\c{c}\~{a}o de \eqref{contsystnonlin}, verifica
$$x_u(0) = 0 \, , \hspace{0.2 cm} x_u(T) = x_1 \, .$$
Isto leva-nos a definir:
\begin{definition}
Seja $T>0$. A aplica\c{c}\~{a}o entrada-sa\'{\i}da em tempo $T$
do sistema de controlo \eqref{contsystnonlin}
inicializado em $0$ \'{e} a aplica\c{c}\~{a}o:
\begin{equation*}
\begin{split}
E_T: \,  & \mathcal{U} \to V\\
& u \mapsto x_u(T)
\end{split}
\end{equation*}
onde $\mathcal{U}$ \'{e} o conjunto dos controlos admiss\'{\i}veis.
\end{definition}

Por outras palavras, a aplica\c{c}\~{a}o entrada-sa\'{\i}da em tempo $T$
associa a um controlo $u$ o ponto final da traject\'{o}ria
associada a $u$. Uma quest\~{a}o importante na teoria
do controlo \'{e} estudar esta aplica\c{c}\~{a}o $E_T$,
descrevendo a sua imagem,
as suas singularidades, a sua regularidade, etc.
A resposta a estas quest\~{o}es depende, obviamente,
do espa\c{c}o $\mathcal{U}$ de partida
e da forma do sistema (da fun\c{c}\~{a}o $f$).
Com toda a generalidade temos o seguinte resultado
(\textrm{vide}, \textrm{e.g.}, \cite{Jurdjevic,Sontag}).
\begin{proposition}
Consideremos o sistema \eqref{contsystnonlin} onde $f$
\'{e} ``suave'' e  $\mathcal{U} \subset L^{\infty}([0, T])$.
Ent\~{a}o $E_T$ \'{e} ``suave'' no sentido $L^{\infty}$.
\end{proposition}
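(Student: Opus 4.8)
The plan is to realize $E_T$ as the composition of the evaluation-at-$T$ functional with the solution map $u \mapsto x_u$, and to prove that this solution map is smooth from $L^\infty([0,T],\R^m)$ into the Banach space $C([0,T],\R^n)$. Since evaluation at $T$ is a bounded linear map, smoothness of $E_T$ follows at once. First I would record, via the Carath\'eodory existence theorem together with Gronwall's inequality, that for each admissible $u$ the integral equation $x(t)=\int_0^t f(x(s),u(s))\,ds$ has a unique absolutely continuous solution on $[0,T]$, and that the solution stays in a fixed compact set as $u$ ranges over a bounded neighbourhood in $L^\infty$. This confines all the values $(x_u(s),u(s))$ to a fixed compact $K\subset\R^n\times\R^m$ on which $f$ and all its derivatives are bounded, which is what makes every subsequent estimate uniform.

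The core of the argument is a parametrized implicit-function-theorem step. I would introduce the map
$$\Phi\colon C([0,T],\R^n)\times L^\infty([0,T],\R^m)\to C([0,T],\R^n),\qquad \Phi(x,u)(t)=x(t)-\int_0^t f(x(s),u(s))\,ds,$$
whose zeros are exactly the trajectories $x_u$. The partial linearization $\partial_x\Phi(x_u,u)$ equals $\mathrm{Id}-V$, where $V$ is the Volterra operator $(V\xi)(t)=\int_0^t \partial_x f(x_u(s),u(s))\,\xi(s)\,ds$; since its kernel is essentially bounded on $K$, one has $\|V^n\|\le(\|\partial_x f\|_\infty T)^n/n!$, so $V$ is quasi-nilpotent and $\mathrm{Id}-V$ is invertible by a Neumann series (equivalently, the linearized Cauchy problem has a unique solution, expressible through the resolvent $M(t)$ of $\dot M=\partial_x f(x_u,u)M$, $M(0)=\mathrm{Id}$). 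The Banach-space implicit function theorem then yields a smooth local solution $u\mapsto x_u$, once $\Phi$ itself is shown to be smooth. Differentiating the relation $\Phi(x_u,u)=0$ reproduces the expected first variation
$$dE_T(u)\cdot h = M(T)\int_0^T M(s)^{-1}\,\partial_u f(x_u(s),u(s))\,h(s)\,ds,$$
namely the value at time $T$ of the variational equation $\dot{\delta x}=\partial_x f\,\delta x+\partial_u f\,h$ with $\delta x(0)=0$.

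The hard part, and the reason the hypothesis $\mathcal U\subset L^\infty$ and the phrase ``suave no sentido $L^\infty$'' are essential, is the smoothness of $\Phi$ --- equivalently, of the superposition (Nemytskii) operator $(x,u)\mapsto f(x(\cdot),u(\cdot))$ from $C([0,T],\R^n)\times L^\infty([0,T],\R^m)$ into $L^\infty([0,T],\R^n)$. Here one must exploit that $L^\infty$ is a Banach algebra and that all arguments remain in the fixed compact $K$: Taylor's formula for $f$ with integral remainder, combined with the uniform bounds on the derivatives of $f$ over $K$, shows that the candidate $k$-th differential $(h_1,\dots,h_k)\mapsto \partial^k f(x,u)[h_1,\dots,h_k]$ is bounded multilinear and that the remainder is $o(\|(\delta x,h)\|_\infty^k)$ in the supremum norm. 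I would stress that this step fails in $L^p$ for $p<\infty$, where superposition operators are typically not even differentiable; this is precisely why the statement is formulated over $L^\infty$.

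Finally I would assemble the pieces: smoothness of $\Phi$ together with invertibility of $\partial_x\Phi$ gives, through the implicit function theorem, that $u\mapsto x_u\in C([0,T],\R^n)$ is $C^\infty$; composing with the bounded evaluation $\mathrm{ev}_T\colon x\mapsto x(T)$ yields that $E_T$ is $C^\infty$ in the $L^\infty$ sense, with differential as displayed above. Since smoothness is a local property, the local conclusions of the implicit function theorem suffice. The one technical point to monitor is global existence on all of $[0,T]$: I would control it by the a priori Gronwall bound of the first step, shrinking the $L^\infty$-neighbourhood of $u$ if necessary so that no perturbed trajectory escapes $K$ before time $T$.
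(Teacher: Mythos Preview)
Your argument is correct and is essentially the standard proof one finds in the references the paper cites. Note, however, that the paper itself does \emph{not} prove this proposition: it merely states the result and points to the literature (\textit{vide, e.g.,} Jurdjevic and Sontag). So there is no ``paper's own proof'' to compare against beyond those external sources.

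Your route --- realize the solution map $u\mapsto x_u$ as the implicit function defined by the integral operator $\Phi(x,u)(t)=x(t)-\int_0^t f(x(s),u(s))\,ds$, invert $\partial_x\Phi=\mathrm{Id}-V$ via the Neumann/Volterra series, and check smoothness of the Nemytskii operator in the $L^\infty$ topology --- is exactly the argument carried out in those texts, and it recovers the differential formula $dE_T(u)\cdot v=\int_0^T M(T)M(s)^{-1}B(s)v(s)\,ds$ that the paper records immediately after the proposition. Your emphasis on why $L^\infty$ (rather than $L^p$, $p<\infty$) is the right control space for the superposition operator to be $C^\infty$ is well placed and is precisely the content of the qualifier ``no sentido $L^\infty$''.

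Two small points worth tightening. First, the a~priori confinement to a compact set $K$ cannot come from a raw Gronwall bound unless $f$ has linear growth; in the generality of the statement you should argue purely locally: fix a reference control $u_0$ with trajectory defined on $[0,T]$, take $K$ a compact neighbourhood of its graph, and use continuous dependence to keep nearby trajectories inside $K$ for the full interval. You already say this at the end, but it should replace, not supplement, the Gronwall step. Second, since $\mathcal U$ is an open subset of $L^\infty$ (those $u$ for which the trajectory exists on $[0,T]$), smoothness is indeed a local statement and your IFT argument gives it directly; no patching is needed.
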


Seja $u \in \mathcal{U}$  um controlo de refer\^{e}ncia. Exprimamos
a diferenciabilidade (no sentido de Fr\'{e}chet) de $E_T$ no ponto $u$.
Consideremos $A(t) = \frac{\partial f}{\partial x}(x_u(t), u(t))$
e $B(t) = \frac{\partial f}{\partial u}(x_u(t), u(t))$. O sistema
\begin{equation*}
\begin{split}
\dot{y}_v(t) &= A(t)y_v(t) + B(t) v(t)\\
y_v(0) &= 0
\end{split}
\end{equation*}
\'{e} chamado \emph{sistema linearizado} ao longo de $(x_u, u)$.
O diferencial de Fr\'{e}chet de $E_T$ em $u$ \'{e} a aplica\c{c}\~{a}o
$$dE_T(u) \cdot v = y_v(T) = \int_0^T M(T)M^{-1}(s) B(s)v(s) ds$$
onde $M$ \'{e} a solu\c{c}\~{a}o matricial de $\dot{M} = AM$, $M(0) = Id$.


\subsection{Controlos singulares}

Seja $u$ um controlo definido sobre $[0,T]$ tal que a traject\'{o}ria partindo
de $x(0)=x_0$ \'{e} definida sobre $[0, T]$. Dizemos que o controlo $u$
(ou a traject\'{o}ria $x_u$) \'{e} singular sobre $[0, T]$ se o diferencial
de Fr\'{e}chet $dE_T(u)$ da aplica\c{c}\~{a}o entrada-sa\'{\i}da
no ponto $u$ n\~{a}o \'{e} sobrejectiva. Caso contr\'{a}rio dizemos que $u$ \'{e} regular.

\begin{proposition}
\label{propEaberta}
Sejam $x_0$ e $T$ fixos. Se $u$ \'{e} um controlo regular,
ent\~{a}o $E_T$ \'{e} uma aplica\c{c}\~{a}o aberta numa vizinhan\c{c}a de $u$.
\end{proposition}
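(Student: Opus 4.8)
The plan is to reduce this infinite-dimensional statement to the classical finite-dimensional inverse function theorem, exploiting that although the source $\mathcal{U} \subseteq L^\infty$ is infinite-dimensional, the target $V \subseteq \R^n$ is finite-dimensional. Throughout I will use that, by the previous proposition, $E_T$ is $C^1$ in the $L^\infty$ sense, so that the Fréchet differential $w \mapsto dE_T(w)$ is continuous.

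First I would exploit the regularity of $u$. Since $dE_T(u):L^\infty([0,T],\R^m)\to\R^n$ is surjective, I can pick controls $v_1,\dots,v_n\in L^\infty$ whose images $dE_T(u)\cdot v_1,\dots,dE_T(u)\cdot v_n$ form a basis of $\R^n$. Freezing these $n$ directions, I define for each base point $w$ the finite-dimensional map
\[
\Phi_w:\R^n\to V,\qquad \Phi_w(\alpha)=E_T\!\left(w+\textstyle\sum_{i=1}^n\alpha_i v_i\right),
\]
which is $C^1$, satisfies $\Phi_w(0)=E_T(w)$, and whose Jacobian at $\alpha=0$ is the $n\times n$ matrix with columns $dE_T(w)\cdot v_i$.

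Next I would apply the inverse function theorem. At $w=u$ this Jacobian is invertible by the choice of the $v_i$. By continuity of $dE_T$, the matrix with columns $dE_T(w)\cdot v_i$ stays invertible for all $w$ in some $L^\infty$-neighbourhood $\mathcal{O}$ of $u$; hence for every $w\in\mathcal{O}$ the map $\Phi_w$ is a local diffeomorphism near $0$, so its image contains a neighbourhood of $\Phi_w(0)=E_T(w)$. To conclude openness, take any open $W\subseteq\mathcal{O}$ and any $w\in W$: for $\alpha$ small enough the points $w+\sum_i\alpha_i v_i$ stay in $W$, and their $E_T$-images sweep out a neighbourhood of $E_T(w)$; thus $E_T(w)$ is interior to $E_T(W)$, so $E_T(W)$ is open.

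The main difficulty is conceptual rather than computational: because $L^\infty$ is infinite-dimensional, $E_T$ cannot be inverted directly as a local diffeomorphism, and the heart of the argument is the slice construction that restricts attention to the finite-dimensional affine subspace $w+\mathrm{span}(v_1,\dots,v_n)$, where the classical theorem applies. The one point that genuinely needs care is that the \emph{same} fixed directions $v_i$ continue to yield an invertible Jacobian as the base point $w$ varies; this is precisely where the continuity of $w\mapsto dE_T(w)$ (i.e.\ the $C^1$-smoothness from the earlier proposition) is used, and it is what upgrades openness at the single point $u$ to openness throughout a neighbourhood of $u$.
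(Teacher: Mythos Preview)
The paper does not actually prove this proposition: it is stated without proof (the text moves directly from the statement to the next subsection on accessible sets and controllability), as is common in this kind of survey. So there is no ``paper's own proof'' to compare against.

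Your argument is the standard one and is correct. The key idea---restricting $E_T$ to a finite-dimensional affine slice $w+\mathrm{span}(v_1,\dots,v_n)$ so that the ordinary inverse function theorem applies---is exactly the right way to handle the infinite-dimensional domain, and you correctly identify that the $C^1$-regularity of $E_T$ (stated in the preceding proposition of the paper) is what makes the invertibility of the Jacobian persist for all $w$ near $u$. The final paragraph, showing that for open $W\subseteq\mathcal{O}$ and $w\in W$ the slice stays inside $W$ for small $\alpha$, is the step that is most often glossed over, and you have it right.

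One small caveat worth making explicit: the argument tacitly assumes that $u$ lies in the $L^\infty$-interior of the set of admissible controls (so that the perturbations $w+\sum_i\alpha_i v_i$ are still admissible). In the paper's setting this is harmless, since at this point no constraint $\Omega\subsetneq\R^m$ is being imposed; but if $\Omega$ were a proper closed subset the statement would need to be interpreted accordingly.
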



\subsection{Conjunto acess\'{\i}vel e controlabilidade}

O conjunto acess\'{\i}vel em tempo $T$ para o sistema \eqref{contsystnonlin},
denotado por $A(T)$, \'{e} o conjunto das extremidades em tempo $T$
das solu\c{c}\~{o}es do sistema partindo de $0$. Por outras palavras,
\'{e} a imagem da aplica\c{c}\~{a}o entrada-sa\'{\i}da em tempo $T$.

\begin{definition}
O sistema \eqref{contsystnonlin} diz-se control\'{a}vel se
$$\cup_{T \geq 0} A(T) = \R^n \, .$$
\end{definition}

Argumentos do tipo do teorema da fun\c{c}\~{a}o impl\'{\i}cita
permitem deduzir os resultados de \emph{controlabilidade local}
do sistema de partida a partir do estudo da controlabilidade
do sistema linearizado (\textrm{vide}, \textrm{e.g.}, \cite{Lee_Markus}).
Por exemplo, deduzimos do teorema de controlabilidade
no caso linear a proposi\c{c}\~{a}o seguinte.
\begin{proposition}
Consideremos o sistema de controlo
\eqref{contsystnonlin} onde $f(0, 0) = 0$.
Seja $A = \frac{\partial f}{\partial x}(0,0)$
e $B = \frac{\partial f}{\partial u}(0,0)$.
Se $$rank \, (B | AB | \cdots | A^{n-1}B) = n $$
ent\~{a}o o sistema n\~{a}o linear
\eqref{contsystnonlin} \'{e} localmente control\'{a}vel em $0$.
\end{proposition}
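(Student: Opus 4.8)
The plan is to reduce local controllability of the nonlinear system to the linear theory of Section~\ref{sec:cont:linear} by linearizing along the trivial trajectory and invoking the open mapping property of a regular control. The starting observation is that, because $f(0,0)=0$, the control $u\equiv 0$ produces the constant trajectory $x_u\equiv 0$: indeed $x\equiv 0$ solves $\dot{x}=f(x,0)$ with $x(0)=0$, and uniqueness of solutions forces this to be \emph{the} associated trajectory. Hence $E_T(0)=x_u(T)=0$ for every $T>0$, so the origin always lies in the image of the entry--exit map $E_T$.

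First I would compute the Fr\'{e}chet differential $dE_T(0)$. Linearizing along $(x_u,u)=(0,0)$ gives the time-invariant matrices $A(t)\equiv \frac{\partial f}{\partial x}(0,0)=A$ and $B(t)\equiv \frac{\partial f}{\partial u}(0,0)=B$, so the linearized system is exactly the constant-coefficient control system $\dot{y}=Ay+Bv$. By the formula recalled above, for constant $A$ one has $M(t)=e^{tA}$, so the image of $dE_T(0)$ coincides with the reachable set from the origin of this linear system, namely $\{\int_0^T e^{(T-s)A}Bv(s)\,ds : v\}$.

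Next I would apply the Kalman condition. Since $\mathrm{rank}(B\mid AB\mid\cdots\mid A^{n-1}B)=n$ by hypothesis, Theorem~\ref{condKalman} guarantees that $\dot{y}=Ay+Bv$ is controllable, which is equivalent to its reachable subspace being all of $\R^n$; that is, $dE_T(0)$ is surjective, so by definition $u\equiv 0$ is a \emph{regular} control. Proposition~\ref{propEaberta} then applies: $E_T$ is open on a neighbourhood of $u\equiv 0$. Since $E_T(0)=0$, the accessible set $A(T)=E_T(\mathcal{U})$ contains an open neighbourhood of the origin of $\R^n$, and a fortiori so does $\cup_{T\ge 0}A(T)$, which is precisely the assertion of local controllability at $0$.

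The main obstacle is the identification, for the linearized system, of surjectivity of $dE_T(0)$ with the Kalman rank condition, i.e.\ verifying that Theorem~\ref{condKalman} characterizes exactly the surjectivity of $v\mapsto\int_0^T e^{(T-s)A}Bv(s)\,ds$. A secondary technical point is that the open mapping step needs the variations $v$ to range over a full neighbourhood of the reference control, so one must know that $u\equiv 0$ is interior to the admissible control set $\Omega$ (equivalently, that $\Omega$ contains a neighbourhood of $0\in\R^m$); under the standing smoothness hypotheses this is what makes Proposition~\ref{propEaberta} applicable and delivers an honest neighbourhood of $0$ in the accessible set.
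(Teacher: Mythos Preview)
Your argument is correct and is precisely the strategy the paper indicates: the paper does not give a detailed proof of this proposition but only states, just before it, that ``argumentos do tipo do teorema da fun\c{c}\~{a}o impl\'{\i}cita'' together with the linear controllability theorem yield the result. You implement exactly this, using the paper's own machinery---the input--output map $E_T$, its Fr\'{e}chet differential along $(0,0)$, and Proposition~\ref{propEaberta} (the open-mapping property at a regular control), which is the ``implicit function theorem'' ingredient the paper alludes to. The two points you flag as obstacles are not genuine gaps: the equivalence between the Kalman rank condition and surjectivity of $v\mapsto\int_0^T e^{(T-s)A}Bv(s)\,ds$ is the content of Theorem~\ref{condKalman} (controllability of the linear system means its reachable subspace from the origin is all of $\R^n$), and your remark that $0$ must be interior to $\Omega$ is a standard hypothesis implicit in any such local-controllability statement.
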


Em geral o problema da controlabilidade \'{e} dif\'{\i}cil.
Diferentes abordagens s\~{a}o poss\'{\i}veis. Umas fazem uso
da An\'{a}lise, outras da Geometria, outras ainda da \'{A}lgebra.
O problema da controlabilidade est\'{a} ligado, por exemplo,
\`{a} quest\~{a}o de saber quando um determinado semi-grupo opera transitivamente.
Existem tamb\'{e}m t\'{e}cnicas para mostrar, em certos casos,
que a controlabilidade \'{e} global. Uma delas, importante,
\'{e} a chamada ``t\'{e}cnica de alargamento''
(\textrm{vide} \cite{Jurdjevic}).


\subsection{Exist\^{e}ncia de controlos \'{o}ptimos}

Para al\'{e}m de um problema de controlo, consideramos tamb\'{e}m
um problema de optimiza\c{c}\~{a}o: de entre todas as solu\c{c}\~{o}es
do sistema \eqref{contsystnonlin} unindo $0$ a $x_1$,
encontrar uma traject\'{o}ria que minimiza (ou maximiza)
uma certa fun\c{c}\~{a}o \emph{custo} $C(T, u)$. Uma tal traject\'{o}ria,
se existir, diz-se \emph{\'{o}ptima} para esse custo.
A exist\^{e}ncia de traject\'{o}rias \'{o}ptimas depende
da regularidade do sistema e do custo. Para um enunciado
geral de exist\^{e}ncia \textrm{vide}, \textrm{e.g.},
\cite{Jurdjevic,Lee_Markus}. Pode tamb\'{e}m acontecer
que um controlo \'{o}ptimo n\~{a}o exista na classe
de controlos considerada, mas exista num espa\c{c}o mais abrangente.
Esta quest\~{a}o remete-nos para outra \'{a}rea importante:
o estudo da regularidade das traject\'{o}rias \'{o}ptimas.
Francis Clarke e Richard Vinter deram um contributo
important\'{\i}ssimo nesta \'{a}rea, introduzindo o estudo sistem\'{a}tico
da regularidade lipschitziana dos minimizantes
no controlo \'{o}ptimo linear \cite{204,Clarke:Vinter:90,book:Vinter}.
Resultados gerais de regularidade lipschitziana das
traject\'{o}rias minimizantes para sistemas
de controlo n\~{a}o lineares podem ser encontrados
em \cite{mcssPraga}.


\subsection{Princ\'{\i}pio do M\'{a}ximo de Pontryagin}

Dado um problema de controlo \'{o}ptimo para o qual
est\~{a}o garantidas as condi\c{c}\~{o}es de exist\^{e}ncia
e regularidade da solu\c{c}\~{a}o \'{o}ptima,
como determinar os processos optimais?
A resposta a esta quest\~{a}o \'{e} dada
pelo c\'{e}lebre \emph{Princ\'{\i}pio do M\'{a}ximo de Pontryagin}.
Para um estudo aprofundado das condi\c{c}\~{o}es
necess\'{a}rias de optimalidade
sugerimos \cite{Clar83,Smirnov05,Trelat_book1}.

Come\c{c}amos por mostrar que uma traject\'{o}ria singular pode ser
parametrizada como a projec\c{c}\~{a}o de uma solu\c{c}\~{a}o de um
sistema hamiltoniano sujeito a uma \emph{equa\c{c}\~{a}o de restri\c{c}\~{a}o}.
Consideremos o \emph{hamiltoniano} do sistema \eqref{contsystnonlin}:
\begin{equation*}
\begin{split}
H \, : \, \R^n \times \R^n \backslash \{0 \} \times \R^m &\to \R\\
(x, p, u) &\mapsto H(x, p, u) = \langle p, f(x, u)\rangle
\end{split}
\end{equation*}
onde $\langle\, , \, \rangle$ denota o produto escalar usual
de $\R^n$.

\begin{proposition}
\label{propcontsing}
Seja $u$ um controlo singular e $x$ a traject\'{o}ria singular associada
a esse controlo em $[0, T]$. Ent\~{a}o, existe um vector linha cont\'{\i}nuo
$p : [0, T] \to \R^n \backslash \{ 0\}$ tal que as equa\c{c}\~{o}es
seguintes s\~{a}o verificadas para quase todo
o $t \in [0, T]$:
\begin{equation*}
\begin{split}
&\dot{x}(t) = \frac{\partial H}{\partial p}(x(t), p(t), u(t)) \, ,
\hspace{0.2 cm} \dot{p}(t) = - \frac{\partial H}{\partial x}(x(t), p(t), u(t)) \\
&\frac{\partial H}{\partial u}(x(t), p(t), u(t)) = 0 \, \, \,
\text{(equa\c{c}\~{a}o de restri\c{c}\~{a}o)}
\end{split}
\end{equation*}
onde $H$ \'{e} o hamiltoniano do sistema.
\end{proposition}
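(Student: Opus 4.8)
The key observation is that a singular control $u$ is, by definition, one for which the Fr\'echet differential $dE_T(u)$ fails to be surjective. Since the image of $dE_T(u)$ is the subspace $\{y_v(T) : v \in \mathcal{U}\} \subseteq \R^n$ spanned by the endpoints of the linearized trajectories, non-surjectivity means this subspace is a \emph{proper} subspace of $\R^n$. By the Hahn--Banach theorem (or elementary linear algebra), there then exists a nonzero row vector $\psi \in \R^n \setminus \{0\}$ orthogonal to the image, i.e. $\psi \cdot y_v(T) = 0$ for every admissible variation $v$. I would define $p(t)$ as the propagation of this covector backward along the adjoint flow, and then verify that the three required equations fall out.

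\textbf{Step 1: Extract the annihilating covector.} Because $dE_T(u)$ is not surjective, its image is a proper subspace of $\R^n$, so I can choose $\psi \in \R^n \setminus \{0\}$ with $\langle \psi, dE_T(u)\cdot v\rangle = 0$ for all $v$. Using the explicit formula $dE_T(u)\cdot v = \int_0^T M(T)M^{-1}(s) B(s) v(s)\,ds$ established in the subsection on the entrada-sa\'ida application, this reads $\int_0^T \psi\, M(T) M^{-1}(s) B(s) v(s)\, ds = 0$ for all admissible $v$.

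\textbf{Step 2: Define the adjoint covector and read off the constraint equation.} Set $p(t) = \psi\, M(T) M^{-1}(t)$, a continuous nonvanishing row vector. Since $M$ solves $\dot M = A M$, a direct differentiation gives $\dot p(t) = -p(t) A(t)$, which is precisely the adjoint equation $\dot p = -\partial H/\partial x$, because $H(x,p,u) = \langle p, f(x,u)\rangle$ yields $\partial H/\partial x = p\,\partial f/\partial x = p\,A(t)$. The Hamilton equation $\dot x = \partial H/\partial p = f(x,u)$ is just the original dynamics \eqref{contsystnonlin}, so it holds automatically. The annihilation identity from Step~1 becomes $\int_0^T p(s) B(s) v(s)\, ds = 0$ for \emph{all} admissible variations $v$; since $v$ ranges over a dense set of $L^\infty$ functions, the fundamental lemma of the calculus of variations forces $p(t) B(t) = 0$ for almost every $t$. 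As $B(t) = \partial f/\partial u$ and $\partial H/\partial u = p\,\partial f/\partial u = p(t) B(t)$, this is exactly the constraint equation $\partial H/\partial u = 0$ a.e.

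\textbf{The main obstacle.} The conceptual content is light once the formula for $dE_T(u)$ is in hand; the delicate point is the passage from ``the integral vanishes for all admissible $v$'' to the pointwise identity $p(t)B(t)=0$ a.e. This requires knowing that the admissible variations $v$ form a rich enough family --- in particular dense in $L^\infty([0,T],\R^m)$ in a suitable sense --- so that the standard du Bois-Reymond / fundamental lemma argument applies. One must also take care that $p$ is genuinely nonvanishing: this is automatic since $\psi \neq 0$ and $M(T)M^{-1}(t)$ is invertible, so $p(t) \neq 0$ for every $t$. I expect the regularity bookkeeping (measurability of $A(t),B(t)$, continuity of $p$, and the ``almost every $t$'' qualifier) to be the only genuinely technical part, whereas the algebraic identification of the three equations with the Hamiltonian system is routine.
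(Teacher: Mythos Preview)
Your proof is correct and follows essentially the same route as the paper's: pick a nonzero covector $\bar p$ annihilating the image of $dE_T(u)$, set $p(t)=\bar p\,M(T)M^{-1}(t)$, differentiate to get the adjoint equation, and read off $p(t)B(t)=0$ a.e.\ from the vanishing integral to obtain the constraint equation. The paper is slightly terser---it jumps straight from the integral identity to $\bar p\,M(T)M^{-1}(s)B(s)=0$ a.e.\ without explicitly naming the fundamental lemma---but the argument is the same.
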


\begin{proof}
Por defini\c{c}\~{a}o, o par $(x, u)$ \'{e} singular sobre $[0, T]$
se $dE_T(u)$ n\~{a}o \'{e} sobrejectiva. Logo existe um vector
linha $\bar{p} \in \R^n \backslash \{ 0 \}$ tal que
\begin{equation*}
\forall \, v(\cdot) \in L^{\infty}([0, T]) \hspace{0.2 cm}
\langle\bar{p}, dE_T(u) \cdot v\rangle
= \bar{p} \int_0^T M(T) M^{-1}(s) B(s) v(s) ds = 0 \, .
\end{equation*}
Consequentemente,
$$\bar{p} M(T) M^{-1}(s) B(s) = 0  \, \, \text{ em q.t.p. de }
[0, T] \, .$$
Seja $p(t) = \bar{p} M(T)M^{-1}(t)$, $t \in [0, T]$.
Temos que $p$ \'{e} um vector linha de $\R^n \backslash
\{ 0\}$ e $p(T) = \bar{p}$. Diferenciando, obtemos
\begin{equation*}
\dot{p}(t) = - p(t) \frac{\partial f}{\partial x}(x(t), u(t)) \, .
\end{equation*}
Introduzindo o hamiltoniano $H(x, p, u) = \langle
p, f(x, u)\rangle$ conclu\'{\i}mos que
$$\dot{x}(t) = f(x(t), u(t)) = \frac{\partial H}{\partial p}(x(t), p(t), u(t))$$
e
$$\dot{p}(t) =
-p(t) \frac{\partial f}{\partial x}(x(t), u(t))
= - \frac{\partial H}{\partial x}(x(t), p(t), u(t)) \, .$$
A equa\c{c}\~{a}o de restri\c{c}\~{a}o vem de $p(t)B(t) = 0$
pois $B(t) = \frac{\partial f}{\partial u}(x(t), u(t))$.
\end{proof}

\begin{definition}
Ao vector linha $p: [0, T] \to \R^n \backslash \{ 0\}$
da Proposi\c{c}\~{a}o~\ref{propcontsing} chamamos \emph{vector adjunto}
do sistema \eqref{contsystnonlin}.
\end{definition}


\subsubsection{Princ\'{\i}pio do M\'{a}ximo fraco (Teorema de Hestenes)}

Procuramos condi\c{c}\~{o}es necess\'{a}rias de optimalidade.
Consideremos o sistema \eqref{contsystnonlin}.
Os controlos $u(\cdot) \in \mathcal{U}$ s\~{a}o definidos
em $[0, T]$ e tomam valores em
$\Omega = \mathbb{R}^m$ (n\~{a}o existem restri\c{c}\~{o}es aos valores
dos controlos). As traject\'{o}rias associadas devem verificar
$x(0) = x_0$ e $x(T) = x_1$.
O problema consiste em minimizar um custo da forma
\begin{equation}
\label{fcustofraco}
C(u) = \int_0^T f^0(x(t), u(t)) dt \, ,
\end{equation}
onde $f^0 : \R^n \times \R^m \to \R$
\'{e} uma aplica\c{c}\~{a}o $C^{\infty}$ e $T$ est\'{a} fixo.

Associamos ao sistema \eqref{contsystnonlin}
o \emph{sistema aumentado}
\begin{equation}
\label{sistaumentado}
\begin{split}
\dot{x}(t) = f(x(t), u(t))\\
\dot{x}^0(t) = f^0(x(t), u(t))
\end{split}
\end{equation}
e usamos a nota\c{c}\~{a}o $\tilde{x} = (x, x^0)$ e $\tilde{f} = (f, f^0)$.
O problema reduz-se ent\~{a}o \`{a} procura de uma traject\'{o}ria solu\c{c}\~{a}o
de \eqref{sistaumentado} com $\tilde{x}_0 = (x_0, 0)$
e $\tilde{x}_1 = (x_1, x^0(T))$ de tal modo
que a \'{u}ltima coordenada $x^0(T)$ seja minimizada.

Seja $\tilde{x}_0 = (x_0, 0)$ fixo. O conjunto dos estados acess\'{\i}veis
a partir de $\tilde{x}_0$ para o sistema \eqref{sistaumentado}
\'{e} $\tilde{A}(\tilde{x}_0, T) = \cup_{u(\cdot)} \tilde{x}(T, \tilde{x}_0, u) $.
Seja, agora, $u^{*}$ um controlo e $\tilde{x}^{*}$ a traject\'{o}ria associada,
solu\c{c}\~{a}o do sistema aumentado \eqref{sistaumentado} saindo de
$\tilde{x}_0 = (x_0, 0)$. Se $u^{*}$ \'{e} \'{o}ptimo para o crit\'{e}rio
\eqref{fcustofraco}, ent\~{a}o o ponto $\tilde{x}^{*}(T)$ pertence
\`{a} fronteira do conjunto $\tilde{A}(\tilde{x}_0, T)$. Com efeito,
se assim n\~{a}o fosse existiria uma vizinhan\c{c}a do ponto
$\tilde{x}(T) = (x_1, x^0(T))$ em $\tilde{A}(\tilde{x}_0, T)$
contendo um ponto $\tilde{y}^{*}(T)$ solu\c{c}\~{a}o do sistema
\eqref{sistaumentado} e tal que $y^0(T) < x^0(T)$,
o que contradiz a optimalidade do controlo $u^{*}$
(Figura~\ref{accessiblesetaumentado}).
Consequentemente, o controlo $\tilde{u}^{*}$ \'{e},
pela Proposi\c{c}\~{a}o~\ref{propEaberta}, um controlo singular
para o sistema aumentado \eqref{sistaumentado}.

\begin{figure}[!htb]
\begin{center}
\begin{picture}(0,0)%
\includegraphics{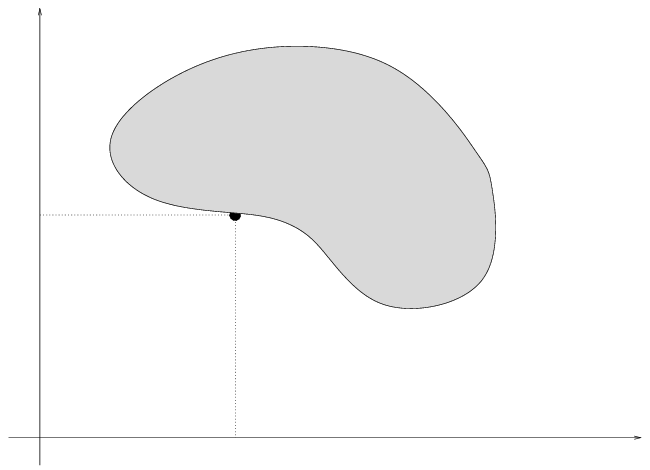}%
\end{picture}%
\setlength{\unitlength}{987sp}%
\begingroup\makeatletter\ifx\SetFigFont\undefined%
\gdef\SetFigFont#1#2#3#4#5{%
  \reset@font\fontsize{#1}{#2pt}%
  \fontfamily{#3}\fontseries{#4}\fontshape{#5}%
  \selectfont}%
\fi\endgroup%
\begin{picture}(12387,8881)(226,-8780)
\put(11926,-8686){\makebox(0,0)[lb]{\smash{\SetFigFont{10}{12.0}{\rmdefault}{\mddefault}{\updefault}{$x$}%
}}}
\put(1501,-211){\makebox(0,0)[lb]{\smash{\SetFigFont{10}{12.0}{\rmdefault}{\mddefault}{\updefault}{$x^0$}%
}}}
\put(4501,-8611){\makebox(0,0)[lb]{\smash{\SetFigFont{10}{12.0}{\rmdefault}{\mddefault}{\updefault}{$x_1$}%
}}}
\put(226,-3961){\makebox(0,0)[lb]{\smash{\SetFigFont{10}{12.0}{\rmdefault}{\mddefault}{\updefault}{$x^0(T)$}%
}}}
\put(5551,-2311){\makebox(0,0)[lb]{\smash{\SetFigFont{10}{12.0}{\rmdefault}{\mddefault}{\updefault}{$\tilde{A}(\tilde{x}_0,T)$}%
}}}
\end{picture}
\end{center}
\caption{se $u^{*}$ \'{e} \'{o}ptimo, ent\~{a}o $\tilde{x}^{*}(T)
\in Fr \, \tilde{A}(\tilde{x}_0, T)$.}
\label{accessiblesetaumentado}
\end{figure}

Usando a Proposi\c{c}\~{a}o~\ref{propcontsing}
obtemos o seguinte teorema.

\begin{theorem}[Princ\'{\i}pio do M\'{a}ximo fraco -- Teorema
de Hestenes \cite{Hestenes}]
\label{teo:Hestenes}
Se $u^{*}$ \'{e} um controlo \'{o}ptimo, ent\~{a}o existe uma aplica\c{c}\~{a}o
$\tilde{p}^{*}:[0, T] \to \R^{n+1} \backslash \{ 0 \}$ tal que
$(\tilde{x}^{*}, \tilde{p}^{*}, \tilde{u}^{*})$
satisfaz o sistema hamiltoniano
\begin{equation}
\label{sisthamilaum1}
\dot{\tilde{x}}^{*}(t) = \frac{\partial \tilde{H}}{\partial
\tilde{p}}(\tilde{x}^{*}(t), \tilde{p}^{*}(t), \tilde{u}^{*}(t)),
\, \, \, \dot{\tilde{p}}^{*}(t) =
- \frac{\partial \tilde{H}}{\partial \tilde{x}}(\tilde{x}^{*}(t),
\tilde{p}^{*}(t), \tilde{u}^{*}(t))
\end{equation}
e a condi\c{c}\~{a}o de estacionaridade
\begin{equation}
\label{sisthamilaum2}
\frac{\partial \tilde{H}}{\partial \tilde{u}}(\tilde{x}^{*}(t),
\tilde{p}^{*}(t), \tilde{u}^{*}(t))=0 \, ,
\end{equation}
onde $\tilde{H}(\tilde{x}, \tilde{p}, u)
= \langle\tilde{p}, \tilde{f}(\tilde{x}, u)\rangle$.
\end{theorem}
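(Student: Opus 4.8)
The plan is to assemble the theorem from the two propositions already in hand, treating the cost as an extra state coordinate. First I would take as given the fact established in the discussion preceding the statement: since $u^{*}$ minimizes the cost \eqref{fcustofraco}, the endpoint $\tilde{x}^{*}(T)$ of the augmented trajectory lies on the boundary $Fr\,\tilde{A}(\tilde{x}_0, T)$ of the accessible set of the augmented system \eqref{sistaumentado} (Figure~\ref{accessiblesetaumentado}). The intuition, already spelled out in the text, is that an interior endpoint would admit a nearby reachable point with strictly smaller final cost coordinate $x^0(T)$, violating optimality.

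Next I would convert this boundary membership into singularity of the control. The cleanest route is the contrapositive of Proposition~\ref{propEaberta}: if $\tilde{u}^{*}$ were regular for the augmented system, then $E_T$ would be an open map near $\tilde{u}^{*}$, so its image --- the accessible set $\tilde{A}(\tilde{x}_0, T)$ --- would contain a full neighbourhood of $\tilde{x}^{*}(T) = E_T(\tilde{u}^{*})$, forcing $\tilde{x}^{*}(T)$ into the interior. This contradicts the previous step, so $\tilde{u}^{*}$ must be a singular control for the augmented system.

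Finally I would apply Proposition~\ref{propcontsing} verbatim to the augmented system, which has state dimension $n+1$ and smooth dynamics $\tilde{f} = (f, f^0)$. It produces a continuous, nowhere-vanishing row vector $\tilde{p}^{*}:[0,T] \to \R^{n+1}\setminus\{0\}$ for which $(\tilde{x}^{*}, \tilde{p}^{*}, \tilde{u}^{*})$ solves the Hamiltonian system \eqref{sisthamilaum1} and satisfies the constraint equation $\partial\tilde{H}/\partial\tilde{u} = 0$, that is, the stationarity condition \eqref{sisthamilaum2}, with $\tilde{H}(\tilde{x}, \tilde{p}, u) = \langle \tilde{p}, \tilde{f}(\tilde{x}, u)\rangle$ --- exactly the claimed conclusion.

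Most of the conceptual weight has been front-loaded into the two propositions and into the optimality-versus-boundary discussion, so the residual work is largely bookkeeping. The step I would watch most carefully is the passage from the boundary property to singularity: it relies on reading Proposition~\ref{propEaberta} in contrapositive form and on the fact that the image of $E_T$ is precisely the accessible set, so that openness of $E_T$ near a regular control genuinely places the endpoint in the interior. I would also double-check that Proposition~\ref{propcontsing} applies unchanged to $\tilde{f}$ despite its degenerate structure (the dynamics never involve the cost coordinate $x^0$), and that the adjoint it yields is truly nonzero in the $(n+1)$-dimensional space, since that nontriviality is what gives the weak maximum principle its content.
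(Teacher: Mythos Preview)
Your proposal is correct and follows essentially the same route as the paper: the argument there is precisely the chain ``optimality $\Rightarrow$ endpoint on the boundary of $\tilde{A}(\tilde{x}_0,T)$ $\Rightarrow$ singularity of $\tilde{u}^{*}$ via Proposition~\ref{propEaberta} $\Rightarrow$ Hamiltonian equations and stationarity via Proposition~\ref{propcontsing} applied to the augmented system''. Your additional sanity checks (that $\tilde{f}$ is smooth despite not depending on $x^0$, and that the adjoint is nontrivial in $\R^{n+1}$) are appropriate but do not alter the strategy.
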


O Teorema~\ref{teo:Hestenes} tem a sua g\'{e}nese nos trabalhos de Graves de 1933,
tendo sido obtido primeiramente por Hestenes em 1950 \cite{Hestenes}.
Trata-se de um caso particular do Princ\'{\i}pio do M\'{a}ximo de Pontryagin,
onde n\~{a}o s\~{a}o consideradas restri\c{c}\~{o}es aos valores dos controlos
(\textrm{i.e.}, $u(t) \in \Omega$ com $\Omega = \R^m$).

Escrevendo $\tilde{p}^{*} = (\tilde{p}_1, \ldots, \tilde{p}_n, p_0)
= (\tilde{p}, p_0) \in (\R^n \times \R) \backslash \{ 0\}$,
onde $p_0$ \'{e} a vari\'{a}vel dual do custo e
$\dot{\tilde{p}}^{*}(t) = - \tilde{p}^{*}(t)
\tilde{f}_{\tilde{x}}(\tilde{x}^{*}, u^{*}(t))$,
temos que $(\tilde{p},p_0)$ satisfaz o sistema
\begin{equation*}
(\dot{p}, \dot{p}_0) = - (p, p_0) \left(
\begin{array}{cc} \frac{\partial f}{\partial x} & 0 \\
\frac{\partial f^0}{\partial x} & 0
\end{array} \right)
\end{equation*}
e
\begin{equation*}
\frac{\partial \tilde{H}}{\partial u} = 0
= p \frac{\partial f}{\partial u} + p_0 \frac{\partial f^0}{\partial u}
\end{equation*}
onde $\tilde{H} = \langle\tilde{p}, \tilde{f}(x, u)\rangle
= p \cdot f + p_0 f^0$.
Repare-se que $\dot{p}_0(t)=0$, isto \'{e}, $p_0(t)$ \'{e} constante em $[0, T]$.
Como o vector $p^{*}(t)$ \'{e} definido a menos de uma constante multiplicativa,
escolhe-se normalmente $p_0 \leq 0$.

\begin{definition}
Uma \emph{extremal} do problema de controlo \'{o}ptimo \'{e} um terno ordenado
$(x, p, u)$ solu\c{c}\~{a}o das equa\c{c}\~{o}es \eqref{sisthamilaum1}
e \eqref{sisthamilaum2}. Se $p_0 = 0$, dizemos que a extremal \'{e} anormal.
Nesse caso ela n\~{a}o depende do custo e $(x(t), u(t))$
\'{e} uma traject\'{o}ria singular do sistema \eqref{contsystnonlin}.
\end{definition}

A designa\c{c}\~{a}o \emph{anormal} \'{e} hist\'{o}rica.
Sabe-se hoje que os minimizantes anormais
s\~{a}o frequentes e ``normais'' em muitos e variad\'{\i}ssimos
problemas de optimiza\c{c}\~{a}o. Ao leitor interessado no estudo
de extremais anormais sugerimos o livro \cite{Arutyunov}.


\subsubsection{Princ\'{\i}pio do M\'{a}ximo de Pontryagin}

O princ\'{\i}pio do m\'{a}ximo de Pontryagin
\'{e} uma vers\~{a}o forte do Teorema~\ref{teo:Hestenes}
onde s\~{a}o admitidas restri\c{c}\~{o}es sobre os valores dos controlos.
A exist\^{e}ncia de tais restri\c{c}\~{o}es \'{e} imposta pelas aplica\c{c}\~{o}es
e altera por completo a natureza das solu\c{c}\~{o}es.
O princ\'{\i}pio do m\'{a}ximo de Pontryagin
\'{e} muito mais dif\'{\i}cil de demonstrar do que o
Teorema de Hestenes
(\textrm{vide}, \textrm{e.g.}, \cite{Lee_Markus,Pontryagin_et_all}).
Para uma abordagem simples ao princ\'{\i}pio do m\'{a}ximo de Pontryagin sugerimos
dois livros excelentes escritos em l\'{\i}ngua Portuguesa:
\cite{leitao,Smirnov05}. O enunciado geral \'{e} o seguinte.

\begin{theorem}[Princ\'{\i}pio do M\'{a}ximo de Pontryagin]
\label{thm:PMP}
Considere-se o sistema de controlo em $\R^n$
\begin{equation*}
\dot{x}(t) = f(x(t), u(t)) \, ,
\end{equation*}
onde $f : \R^n \times \R^m \to \R^n$ \'{e} de classe $C^1$
e onde os controlos s\~{a}o aplica\c{c}\~{o}es mensur\'{a}veis e limitadas,
definidos no intervalo $[0, t(u)]$ de $\R$. Denotemos
por $\mathcal{U}$ o conjunto dos controlos admiss\'{\i}veis
cujas traject\'{o}rias associadas unem um ponto inicial
de $M_0$ a um ponto final de $M_1$. Para um tal
controlo definimos o custo
\begin{equation*}
C(u) = \int_0^{t(u)} f^0(x(t), u(t)) dt \, ,
\end{equation*}
onde $f^0 : \R^n \times \R^m \to \R$ \'{e} de classe $C^1$.

Se o controlo $u \in \mathcal{U}$ \'{e} \'{o}ptimo em $[0, t_*]$,
ent\~{a}o existe uma aplica\c{c}\~{a}o n\~{a}o trivial (\textrm{i.e.},
n\~{a}o identicamente nula) $(p(\cdot), p^0) : [0, t_{*}]
\to \R^n \times \R$ absolutamente cont\'{\i}nua, chamada vector adjunto,
onde $p^0$ \'{e} uma constante negativa ou nula, tal que a traject\'{o}ria \'{o}ptima
$x$ associada ao controlo $u$ verifica, em quase todos os pontos de
$[0, t_*]$, o \emph{sistema hamiltoniano}
\begin{equation*}
\dot{x} = \frac{\partial H}{\partial p}(x, p, p^0, u) \, ,
\, \, \, \dot{p} = - \frac{\partial H}{\partial x}(x, p, p^0, u)
\end{equation*}
e a \emph{condi\c{c}\~{a}o do m\'{a}ximo}
\begin{equation*}
H(x(t), p(t), p^0, u(t)) = \max_{v \in \Omega}
H(x(t), p(t), p^0, v) \, ,
\quad \text{q.t.p. } t \in [0, t_*] \, ,
\end{equation*}
onde o \emph{hamiltoniano} $H$ \'{e} dado por
$H(x, p, p^0, u) = \langle p, f(x, u) \rangle
+ p^0 f^0(x, u)$. Al\'{e}m disso, tem-se
para todo o $t \in [0, t_*]$ que
\begin{equation}
\label{condigualzero}
\max_{v \in \Omega} H(x(t), p(t), p^0, v) = 0 \, .
\end{equation}
Se $M_0$ e/ou $M_1$ s\~{a}o variedades de $\R^n$ com espa\c{c}os tangentes
$T_{x(0)}M_0$ em $x(0) \in M_0$ e $T_{x(t_{*})}M_1$
em $x(t_*) \in M_1$, ent\~{a}o o vector adjunto
satisfaz as seguintes condi\c{c}\~{o}es de transversalidade:
$$p(0) \bot T_{x(0)}M_0 \, \, \text{ e } \, \,
p(t_*) \bot T_{x(t_{*})}M_1 \, .$$
\end{theorem}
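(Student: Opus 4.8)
The plan is to build on the template already used for the weak principle (Theorem~\ref{teo:Hestenes}), but to replace the smooth $L^\infty$ variations by \emph{needle} (spike) variations, which are precisely what allow the values of $u$ to be constrained to $\Omega$ and produce a \emph{maximization} condition rather than mere stationarity. As before I work with the augmented system, writing $\tilde x = (x, x^0)$ and $\tilde f = (f, f^0)$, and I observe that optimality of $u$ on $[0, t_*]$ forces the endpoint $\tilde x(t_*)$ to lie on the boundary of the augmented accessible set: no admissible trajectory may reach the same $x$-component $x(t_*)$ while lowering the cost coordinate $x^0(t_*)$.

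First I would fix a Lebesgue point $\tau \in (0, t_*)$ of $u$ and an arbitrary value $v \in \Omega$, and define the needle control equal to $v$ on $[\tau - \ell, \tau]$ and to $u$ elsewhere. Letting $\ell \to 0^+$ and linearizing the flow of \eqref{contsystnonlin} along the reference trajectory, the induced first-order displacement of the endpoint is $w_{\tau, v} = M(t_*) M^{-1}(\tau)\bigl(\tilde f(\tilde x(\tau), v) - \tilde f(\tilde x(\tau), u(\tau))\bigr)$, where $M$ solves $\dot M = (\partial \tilde f/\partial \tilde x)\,M$, $M(0) = \mathrm{Id}$. The set of all such vectors, together with their nonnegative combinations obtained by superposing finitely many disjoint needles, generates a convex cone $K$ --- the \emph{Pontryagin cone} --- of first-order admissible directions into $\tilde A(\tilde x_0, t_*)$.

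The heart of the argument is a separation step. Since $u$ is optimal, the direction of strict cost decrease (the negative $x^0$-axis) cannot be an interior direction of $K$; otherwise one could produce an admissible trajectory reaching the same $x(t_*)$ with strictly smaller $x^0(t_*)$. Hence there is a nonzero covector $\tilde p(t_*) = (p(t_*), p^0)$ separating $K$ from that descent direction, with $p^0 \le 0$ forced by the orientation of the separation and $(p, p^0) \neq 0$ by nontriviality of the separating functional. Propagating $\tilde p$ backward by $\tilde p(t) = \tilde p(t_*) M(t_*) M^{-1}(t)$ --- equivalently by the adjoint equation $\dot p = -\partial H/\partial x$ of Proposition~\ref{propcontsing} --- the inequality $\langle \tilde p(t_*), w_{\tau, v}\rangle \le 0$ collapses, at every Lebesgue point $\tau$, to $H(x(\tau), p(\tau), p^0, v) \le H(x(\tau), p(\tau), p^0, u(\tau))$ for all $v \in \Omega$, which is exactly the maximization condition; the Hamiltonian system itself is then read off as in Proposition~\ref{propcontsing}.

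To finish I would treat the free final time and the transversality conditions by enlarging the cone. Varying $t_*$ adjoins $\pm \tilde f(\tilde x(t_*), u(t_*))$ to the admissible directions, which forces $\langle \tilde p(t_*), \tilde f\rangle = 0$; since $f$ and $f^0$ are autonomous, $H$ is constant along the extremal, giving $H \equiv 0$ and hence \eqref{condigualzero}. When $M_0$ and $M_1$ are manifolds, the same endpoint variations are taken along $T_{x(0)}M_0$ and $T_{x(t_*)}M_1$, so the separating covector must annihilate these tangent spaces, yielding $p(0) \perp T_{x(0)}M_0$ and $p(t_*) \perp T_{x(t_*)}M_1$. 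The main obstacle, and the reason the full principle is far harder than Hestenes' theorem, is the separation step: the needle variations furnish only \emph{first-order} directions, and one must upgrade this infinitesimal information to genuine trajectories. This requires a topological fixed-point argument (a Brouwer-degree or open-mapping argument applied to the finite-dimensional map from the needle parameters to the endpoint) to guarantee that an interior descent direction of $K$ is actually realized by an admissible control --- precisely the delicate interaction of convexity and nonlinearity that the smooth variations and Proposition~\ref{propEaberta} avoided in the weak case.
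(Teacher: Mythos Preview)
Your sketch is a faithful outline of the classical needle-variation proof (Pontryagin cone, separation, adjoint transport, Brouwer-type fixed-point step), and I see no genuine gap in the plan as a plan. However, the paper does \emph{not} prove Theorem~\ref{thm:PMP} at all: immediately before the statement it says that the full principle ``\'{e} muito mais dif\'{\i}cil de demonstrar do que o Teorema de Hestenes'' and simply sends the reader to \cite{Lee_Markus,Pontryagin_et_all,leitao,Smirnov05}. So there is no in-paper proof to compare against.

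That said, your outline is exactly the approach of those references --- the augmented system, needle perturbations at Lebesgue points, the convex cone of first-order variations, a separating covector yielding $p^0\le 0$ and the maximization condition after backward propagation by the adjoint flow, and the enlargement of the cone by time and tangent-space variations to obtain \eqref{condigualzero} and the transversality conditions. Your closing remark about the topological step (upgrading infinitesimal directions to actual trajectories via a Brouwer/open-mapping argument) is precisely the point the paper flags as the hard part beyond Proposition~\ref{propEaberta} and Theorem~\ref{teo:Hestenes}. In short: the paper gives no proof, and what you wrote is the standard one it cites.
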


\begin{remark}
No Teorema~\ref{thm:PMP} o tempo final \'{e} livre. Se impusermos
um tempo final fixo igual a $T$, isto \'{e}, se procuramos,
partindo de $M_0$, atingir o alvo $M_1$ em tempo $T$
e minimizando o custo $C(u)$ em $[0, T]$ (problema a tempo fixo),
ent\~{a}o o teorema continua verdadeiro, salvo a condi\c{c}\~{a}o
\eqref{condigualzero} que deve ser substitu\'{\i}da por
$$
\max_{v \in \Omega} H(x(t), p(t), p^0, v)
= \text{const}  \quad \forall \, t \in [0, T]
$$
(com constante n\~{a}o necessariamente nula).
\end{remark}

\begin{remark}
O problema de tempo m\'{\i}nimo corresponde ao caso em que $f^0 = 1$.
\end{remark}

\begin{remark}
Se o conjunto alvo $M_1$ \'{e} igual a todo o $\R^n$
(problema com extremidade final livre),
ent\~{a}o a condi\c{c}\~{a}o de transversalidade
no instante final diz-nos que $p(t_*) = 0$.
\end{remark}

O princ\'{\i}pio do m\'{a}ximo de Pontryagin \'{e} um resultado profundo
e importante da Matem\'{a}tica contempor\^{a}nea, com in\'{u}meras aplica\c{c}\~{o}es
na F\'{\i}sica, Biologia, Gest\~{a}o, Economia, Ci\^{e}ncias Sociais,
Engenharia, etc. (\textrm{vide}, \textrm{e.g.}, \cite{Bryson}).


\subsection{Exemplo: controlo \'{o}ptimo
de um oscilador harm\'{o}nico (caso n\~{a}o linear)}

Reconsideremos o exemplo (n\~{a}o linear) da mola,
modelado pelo sistema de controlo
\begin{equation*}
\begin{split}
\dot{x}(t) &= y(t) \, ,\\
\dot{y}(t) &= - x(t) -2x(t)^3 + u(t) \, ,
\end{split}
\end{equation*}
onde admitimos como controlos todas as fun\c{c}\~{o}es $u(\cdot)$
seccionalmente cont\'{\i}nuas tais que $|u(t)| \leq 1$.
O objectivo consiste em levar a mola de uma posi\c{c}\~{a}o inicial
qualquer $(x_0, y_0= \dot{x}_0)$ \`{a} sua posi\c{c}\~{a}o de equil\'{\i}brio
$(0, 0)$ \emph{em tempo m\'{\i}nimo} $t_*$.

Apliquemos o Princ\'{\i}pio do M\'{a}ximo de Pontryagin a este problema.
O hamiltoniano tem a forma
$$H(x, y, p_x, p_y, p^0, u)
= p_x y + p_y (-x -2x^3 + u) + p^0 \, .$$
Se $(x, y, p_x, p_y, p^0, u)$ \'{e} uma extremal, ent\~{a}o
\begin{equation*}
\dot{p}_x = -\frac{\partial H}{\partial x}= p_y (1 + 6x^2) \, \,
\text{ e } \, \, \dot{p}_y = -\frac{\partial H}{\partial y} = -p_x \, .
\end{equation*}
Notemos que uma vez que o vector adjunto $(p_x, p_y, p^0)$
deve ser n\~{a}o trivial, $p_y$ n\~{a}o pode anular-se num intervalo
(sen\~{a}o ter\'{\i}amos igualmente $p_x = - \dot{p}_y =0$ e,
por anula\c{c}\~{a}o do hamiltoniano, ter\'{\i}amos tamb\'{e}m $p^0=0$).
Por outro lado, a condi\c{c}\~{a}o do m\'{a}ximo d\'{a}-nos
\begin{equation*}
p_y u = \max_{|v| \leq 1}p_y(t) \, .
\end{equation*}
Em particular, os controlo \'{o}ptimos s\~{a}o sucessivamente iguais
a $\pm 1$, isto \'{e}, verifica-se o princ\'{\i}pio \emph{bang-bang}
(\textrm{vide}, \textrm{e.g.}, \cite{Lee_Markus,MR0638591}).
Concretamente, podemos afirmar que
\begin{equation*}
u(t) = sinal (p_y(t)) \, \, \text{onde} \, \, p_y \, \,
\text{\'{e} a solu\c{c}\~{a}o de} \, \,
\begin{cases}
\ddot{p}_y(t) + p_y(t)(1 + 6x(t)^2) = 0\\
p_y(t_*) = \cos \alpha, \, \dot{p}_y(t_*) = - \sin \alpha \, ,
\end{cases}
\end{equation*}
$\alpha \in [0, 2 \pi[$.
Invertendo o tempo $(t \mapsto -t)$ o nosso problema
\'{e} equivalente ao problema de tempo m\'{\i}nimo para o sistema
\begin{equation*}
\begin{cases}
\dot{x}(t) = - y(t)\\
\dot{y}(t) = x(t) + 2 x(t)^3 -sinal(p_y(t))\\
\dot{p}_y(t) = p_x(t)\\
\dot{p}_x(t) = - p_y(t)(1 + 6x(t)^2) \, .
\end{cases}
\end{equation*}

Dadas as condi\c{c}\~{o}es iniciais $x_0$ e $\dot{x}_0$
(posi\c{c}\~{a}o e velocidade inicial da massa), o problema
\'{e} facilmente resolvido. O leitor interessado
encontra em \cite{Trelat_book1} uma resolu\c{c}\~{a}o
efectuada com o sistema de computa\c{c}\~{a}o alg\'{e}brica
Maple. Sobre o uso do Maple no c\'{a}lculo das varia\c{c}\~{o}es
e controlo \'{o}ptimo veja-se \cite{comPauloLituania05,comAndreia}.


\section*{Nota final}

A Teoria Matem\'{a}tica dos Sistemas e Controlo
\'{e} ensinada nas institui\c{c}\~{o}es dos autores,
nos Departamentos de Matem\'{a}tica
da Universidade de Aveiro
e da Universidade de Orl\'{e}ans, Fran\c{c}a.
Em Aveiro no \^{a}mbito do Mestrado \emph{Matem\'{a}tica e Aplica\c{c}\~{o}es},
especializa\c{c}\~{a}o em \emph{Matem\'{a}tica Empresarial e Tecnol\'{o}gica}
\cite{link:MSc:Aveiro}, e no \^{a}mbito do \emph{Programa Doutoral
em Matem\'{a}tica e Aplica\c{c}\~{o}es} -- este \'{u}ltimo uma associa\c{c}\~{a}o entre
os Departamentos de Matem\'{a}tica da Universidade de Aveiro
e da Universidade do Minho \cite{link:PhD:UA-UM};
em Orl\'{e}ans na op\c{c}\~{a}o ``Controlo Autom\'{a}tico''
do Mestrado PASSION \cite{link:PASSION}.
O primeiro autor foi aluno de Mestrado
em Aveiro e faz actualmente um doutoramento
em Aveiro e Orl\'{e}ans na \'{a}rea do Controlo \'{O}ptimo,
com o apoio financeiro da FCT,
bolsa SFRH/BD/27272/2006.

Agradecemos a um revisor an\'{o}nimo a aprecia\c{c}\~{a}o
cuidada e as numerosas e pertinentes observa\c{c}\~{o}es,
coment\'{a}rios e sugest\~{o}es.




\begin{thebibliography}{99}

\bibitem{MR2005b:93002}
A. A. Agrachev, Y. L. Sachkov.
{\it Control theory from the geometric viewpoint},
Encyclopaedia Math. Sci., 87, Springer, Berlin, 2004.

\bibitem{Arutyunov}
A. V. Arutyunov, {\it Optimality conditions -- Abnormal
and degenerate problems}, Kluwer Acad. Publ., Dordrecht, 2000.

\bibitem{livro:Bressan:Piccoli}
A. Bressan\ and\ B. Piccoli,
{\it Introduction to the mathematical theory of control},
American Institute of Mathematical Sciences (AIMS),
Springfield, MO, 2007.

\bibitem{Bryson}
A. E. Bryson Jr.
Optimal control -- 1950 to 1985,
IEEE Control Syst. Mag {\bf 16} (1996), no.~3, 26--33.

\bibitem{Clar83} F. H. Clarke.
{\it Optimization and nonsmooth analysis},
Wiley, New York, 1983.

\bibitem{17} F. H. Clarke.
Nonsmooth analysis in control theory: a survey,
Eur. J. Control {\bf 7} (2001), 63--78.

\bibitem{102} F. H. Clarke.
Necessary conditions in dynamic optimization,
Mem. Amer. Math. Soc. {\bf 173}, 2005.

\bibitem{24}
F. H. Clarke, Yu. S. Ledyaev, R. J. Stern, P. R. Wolenski.
{\it Nonsmooth analysis and control theory},
Springer, New York, 1998.

\bibitem{204} F. H. Clarke, R. B. Vinter.
Regularity properties of solutions to the basic problem
in the calculus of variations, Trans. Amer. Math. Soc.
{\bf 289} (1985), no.~1, 73--98.

\bibitem{Clarke:Vinter:90} F. H. Clarke, R. B. Vinter.
Regularity properties of optimal controls,
SIAM J. Control Optim. {\bf 28} (1990), no.~4, 980--997.

\bibitem{Coron_Trelat_2004} J. M. Coron, E. Tr\'{e}lat.
Tout est sous contr\^{o}le, Plein Sud -- Sp\'{e}cial Recherche,
2004, 126--131.

\bibitem{Coron_Trelat_2007} J. M. Coron, E. Tr\'{e}lat.
 {\it Tout est sous contr\^{o}le}, Matapli 83, 2007, 59--73.

\bibitem{comRui:TS:Lisboa07}
R. A. C. Ferreira, D. F. M. Torres.
Higher-order calculus of variations on time scales,
Mathematical Control Theory and Finance,
Springer, 2008, 149--159.

\bibitem{311} G. N. Galbraith, R. B. Vinter.
Optimal control of hybrid systems
with an infinite set of discrete states,
J. Dynam. Control Systems {\bf 9} (2003), no.~4, 563--584.

\bibitem{comPauloLituania05} P. D. F. Gouveia, D. F. M. Torres.
Automatic computation of conservation laws in the calculus
of variations and optimal control,
Comput. Methods Appl. Math. {\bf 5} (2005), no.~4, 387--409.

\bibitem{Hestenes} M. R. Hestenes.
An Elementary Introduction to the Calculus of Variations,
Math. Mag. {\bf 23} (1950), no.~5, 249--267.

\bibitem{Jurdjevic} V. Jurdjevic.
{\it Geometric control theory},
Cambridge Univ. Press, Cambridge, 1997.

\bibitem{Lee_Markus} E. B. Lee, L. Markus.
{\it Foundations of optimal control theory},
Wiley, New York, 1967.

\bibitem{leitao} A. Leit\~{a}o.
{\it C\'{a}lculo Variacional e Controle \'{O}timo},
Instituto de Matem\'atica Pura e Aplicada (IMPA),
Rio de Janeiro, 2001.

\bibitem{comAndreia}
A. M. F. Louro, D. F. M. Torres.
Computa\c{c}\~{a}o Simb\'{o}lica em Maple
no C\'{a}lculo das Varia\c{c}\~{o}es,
Bol. Soc. Port. Mat. {\bf 59} (2008), 13--30.

\bibitem{MR0638591} J. W. Macki, A. Strauss.
{\it Introduction to optimal control theory},
Springer, New York, 1982.

\bibitem{Naty} N. Martins, D. F. M. Torres.
Calculus of Variations on Time Scales
with Nabla Derivatives, Nonlinear Anal.,
in press. DOI: 10.1016/j.na.2008.11.035

\bibitem{McShane} E. J. McShane.
The calculus of variations from the beginning through optimal
control theory, SIAM J. Control Optim. {\bf 27} (1989), no.~5,
916--939.

\bibitem{Pontryagin_et_all} L. S. Pontryagin,
V. G. Boltyanskii, R. V. Gamkrelidze, E. F. Mishchenko.
{\it The mathematical theory of optimal processes},
Translated from the Russian by K. N. Trirogoff;
edited by L. W. Neustadt, Interscience Publishers
John Wiley \& Sons, Inc.\, New York, 1962.

\bibitem{comCristianaCC} C. J. Silva, D. F. M. Torres.
Two-dimensional Newton's problem of minimal resistance,
Control Cybernet. {\bf 35} (2006), no.~4, 965--975.

\bibitem{Smirnov05} G. Smirnov, V. Bushenkov.
{\it Curso de Optimiza\c{c}\~{a}o -- Programa\c{c}\~{a}o Matem\'{a}tica,
C\'{a}lculo de Varia\c{c}\~{o}es, Controlo \'{O}ptimo},
Escolar Editora, 2005.

\bibitem{Sontag} E. D. Sontag.
{\it Mathematical control theory}, Springer, New York, 1990.

\bibitem{Sussmann} H. J. Sussmann, J. C. Willems.
{\it 300 Anos de Controlo Optimal: da Braquist\'{o}crona
ao Princ\'{\i}pio do M\'{a}ximo}, Bol. Soc. Port. Mat.
{\bf 45} (2001), 21--54.

\bibitem{mcssPraga} D. F. M. Torres.
Lipschitzian regularity of the minimizing trajectories
for nonlinear optimal control problems,
Math. Control Signals Systems {\bf 16} (2003), no.~2-3, 158--174.

\bibitem{NewtonTypeProb} D. F. M. Torres, A. Yu. Plakhov.
Optimal control of Newton-type problems of minimal resistance,
Rend. Semin. Mat. Univ. Politec. Torino {\bf 64} (2006),
no.~1, 79--95.

\bibitem{Trelat_2008a}  E. Tr\'{e}lat.
Th\'{e}orie du contr\^{o}le: contr\^{o}le optimal et stabilisation,
Microscoop {\bf 55} (2008), 14--15.

\bibitem{Trelat_2008} E. Tr\'{e}lat.
{\it Introduction au contr\^{o}le optimal},
Revue de Math. Sp\'{e}, Math. Concr\`{e}tes 3, 2002/2003.

\bibitem{Trelat_book1} E. Tr\'elat.
{\it Contr\^{o}le optimal}, Vuibert, Paris, 2005.

\bibitem{book:Vinter} R. Vinter.
{\it Optimal control},
Birkh\"auser Boston, Boston, MA, 2000.

\bibitem{link:MSc:Aveiro}
\href{http://www.mat.ua.pt/PageCourse.aspx?id=123&b=1}{{\small
\url{http://www.mat.ua.pt/PageCourse.aspx?id=123&b=1}}}

\bibitem{link:PhD:UA-UM}
\href{http://www.mat.ua.pt/PageText.aspx?id=6248}{{\small
\url{http://www.mat.ua.pt/PageText.aspx?id=6248}}}

\bibitem{link:PASSION}
\href{http://www.univ-orleans.fr/mapmo/membres/trelat/masterPASSION.html}{{\small
\url{http://www.univ-orleans.fr/mapmo/membres/trelat/masterPASSION.html}}}

\end{thebibliography}
\end{document}